\theoremstyle{plain}
\newtheorem{theorem}{Theorem}[section]
\newtheorem{proposition}[theorem]{Proposition}
\theoremstyle{definition}
\theoremstyle{remark}
\newtheorem{remark}[theorem]{Remark}
\DeclareMathOperator*{\argmin}{arg\,min}
\newcommand{\algcomment}[1]{\hfill$\triangleright$~#1}
\icmltitlerunning{Stability Regularized Cross-Validation}
\begin{document}

\twocolumn[
  \icmltitle{Stability Regularized Cross-Validation}



  \icmlsetsymbol{equal}{*}

  \begin{icmlauthorlist}
    \icmlauthor{Ryan Cory-Wright}{a}
    \icmlauthor{Andr{\'e}s G{\'o}mez}{b}
  \end{icmlauthorlist}
  \icmlaffiliation{a}{ Department of Analytics, Marketing and Operations, Imperial Business School, UK}
  \icmlaffiliation{b}{ Department of Industrial and Systems Engineering, Viterbi School of Engineering, University of Southern California, CA, USA}

  \icmlcorrespondingauthor{Ryan Cory-Wright}{r.cory-wright@imperial.ac.uk}

  \icmlkeywords{Machine Learning, ICML}

  \vskip 0.3in
]



\printAffiliationsAndNotice{}  

\begin{abstract}
 We revisit the problem of ensuring strong test set performance via cross-validation, and propose a nested k-fold cross-validation scheme that selects hyperparameters by minimizing a weighted sum of the usual cross-validation metric and an empirical model-stability measure. The weight on the stability term is itself chosen via a nested cross-validation procedure. This reduces the risk of strong validation set performance and poor test set performance due to instability. We benchmark our procedure on a suite of $13$ real-world datasets, and find that, compared to $k$-fold cross-validation over the same hyperparameters, it improves the out-of-sample MSE for sparse ridge regression and CART by $4\%$ and $2\%$ respectively on average, but has no impact on XGBoost. It also reduces the user's out-of-sample disappointment, sometimes significantly. For instance, for sparse ridge regression, the nested k-fold cross-validation error is on average $0.9\%$ lower than the test set error, while the $k$-fold cross-validation error is $21.8\%$ lower than the test error. Thus, for unstable models such as sparse regression and CART, our approach improves test set performance and reduces out-of-sample disappointment.
\end{abstract}

\section{Introduction}
A central problem in machine learning involves constructing models that reliably generalize well to unseen data. One of the most popular approaches is cross-validation as introduced by \cite{stone1974cross, geisser1975predictive}, which selects hyperparameters that perform well on a cross-validation set as a proxy for strong test set performance. Moreover, in the statistical learning literature, there is a broad set of conditions under which the cross-validation loss is a good estimator of the test set error \cite{plutowski1993cross, arlot2010survey,homrighausen2013lasso, hastie2020best, patil2021uniform}, especially when the sample size is large \cite{kearns1997algorithmic, bousquet2002stability}. 

However, both theory \citep[e.g.][]{shao1993linear,gupta2021small} and experiments \citep[e.g.][]{reunanen2003overfitting, rao2008dangers, stephenson2021can, bates2021cross} have documented an \textit{adaptivity gap} between validation and test set performance (although not always observed \cite{roelofs2019meta, bates2021cross}), especially in settings with limited data. Concretely, an adaptivity gap arises when the validation error is systematically lower than the test set error of an ML model.

One explanation for adaptivity gaps is as follows: validation sets yield approximately unbiased estimates of test set performance for a \textit{fixed} combination of hyperparameters. However, validation scores are random variables and subject to some variance. Therefore, the act of \textit{optimizing} the validation set error risks selecting hyperparameter combinations that disappoint out-of-sample. In the extreme case where the number of hyperparameters is much larger than the number of samples, optimizing the (cross) validation error can be viewed as training on the (cross) validation set. This phenomenon is well documented in different parts of the statistics and optimization literature, where it is variously called ``post-decision surprise'', ``out-of-sample disappointment'', ``researcher degrees of freedom'' or ``the optimizer's curse'' \citep[]{harrison1984decision, ng1997preventing, smith2006optimizer, simmons2011false, van2021data}. This raises the question: \textit{is it possible to improve the performance of (cross) validation by combating the adaptivity gap}, possibly by selecting models with a higher validation error and a lower score according to a second metric.

In this work, we propose a strategy for mitigating out-of-sample disappointment and show that it sometimes improves cross-validation's performance. Specifically, we propose selecting models according to a weighted sum of their cross-validation error and their empirical \textit{hypothesis stability} (see Section \ref{ssec:genbounds_kfold}). This is motivated by the observation that both the cross-validation error and the hypothesis stability appear in generalization bounds for test set error; thus, minimizing the cross-validation error alone may lead to the selection of high-variance models that perform poorly out of sample.


Our main contributions are threefold. First, we extend a generalization bound on the test set error due to \cite{bousquet2002stability} from leave-one-out to $k$-fold cross-validation (kCV). This generalization bound takes the form of the cross-validation error plus a term related to a model's hypothesis stability. Second, motivated by this (often conservative) bound, we propose \textit{regularizing} cross-validation by selecting models that minimize a weighted sum of a validation metric and the hypothesis stability, rather than the validation score alone, to mitigate out-of-sample disappointment without being overly conservative. Indeed, models with a low cross-validation error \textit{that are stable} generalize better than models with a low cross-validation error that are unstable. Moreover, to select the weight in this scheme, we embed the entire scheme within a nested cross-validation procedure. Finally, we empirically evaluate our proposal using sparse ridge regression, CART, and XGBoost, and find that it improves the out-of-sample performance of sparse ridge regression and CART by $4\%$ on average, but has no impact on XGBoost, likely because XGBoost is more stable.

\subsection{Motivating Example: Poor Performance of Cross-Validation for Sparse Linear Regression}\label{sec:intro_motivation}
We illustrate the pitfalls of cross-validation in a sparse ridge regression setting, as studied by \cite{bertsimas2020sparse,hazimeh2021sparse, liu2023okridge}. Suppose we wish to recover a $\tau_{\text{true}}=5$ sparse regressor which is generated from a stochastic process according to the following setup \citep[cf.][]{bertsimas2020sparse2}: we fix the number of features $p$, number of data points $n$, correlation parameter $\rho=0.3$ and signal to noise parameter $\nu=1$, and generate $\bm{X}, \bm{y}$ according to a data generation procedure standard to the literature and stated in Appendix \ref{sec:datagenprocess} for brevity.

Following the standard {\color{black}cross-validation} paradigm, we then evaluate the {\color{black}cross-validation} error for each $\tau$ and $20$ values of $\gamma$ log-uniformly distributed on $[10^{-3}, 10^3]$, using the Generalized Benders Decomposition scheme developed by \cite{bertsimas2020sparse} to solve each MIO to optimality, which is of the form
\begin{align}
  \min_{\bm{\beta}\in \mathbb{R}^p} \quad & \frac{1}{2\gamma}\Vert \bm{\beta}\Vert_2^2+\frac{1}{2}\Vert \bm{X}\bm{\beta}-\bm{y}\Vert_2^2 \ \text{s.t.} \ \Vert \bm{\beta}\Vert_0 \leq \tau,
\end{align}
and selecting the hyperparameter combination with the lowest {\color{black}leave-one-out cross-validation error.} 

{\color{black}Figure \ref{fig:l10cv_test_comp} depicts each hyperparameter combination's leave-one-out (left) and test (right) error, in an overdetermined setting where $n=50, p=10$ (top) and an underdetermined setting where $n=10, p=50$ (bottom). 
We generate equivalent plots for five-fold cross-validation in Figure \ref{fig:fivefoldcv_test_comp} (Appendix \ref{sec.append:heatmap}) and obtain similar results. 
In the overdetermined setting, cross-validation performs well: a model trained by minimizing the LOOCV (resp. five-fold) cross-validation error attains a test error within $0.6\%$ (resp. $1.1\%$) of the (unknowable) test minimum. However, in the underdetermined setting, cross-validation performs poorly: a model trained by minimizing the LOOCV (resp. five-fold) error attains a test set error $16.4\%$ (resp. $31.7\%$) larger than the test set minimum and is seven orders of magnitude larger (resp. one order of magnitude larger) than its LOOCV estimator. This highlights the danger of optimizing the cross-validation error alone, especially in underdetermined settings.

\begin{figure*}[h!]
    \centering
    \begin{subfigure}[t]{.45\linewidth}
    \includegraphics[scale=0.33]{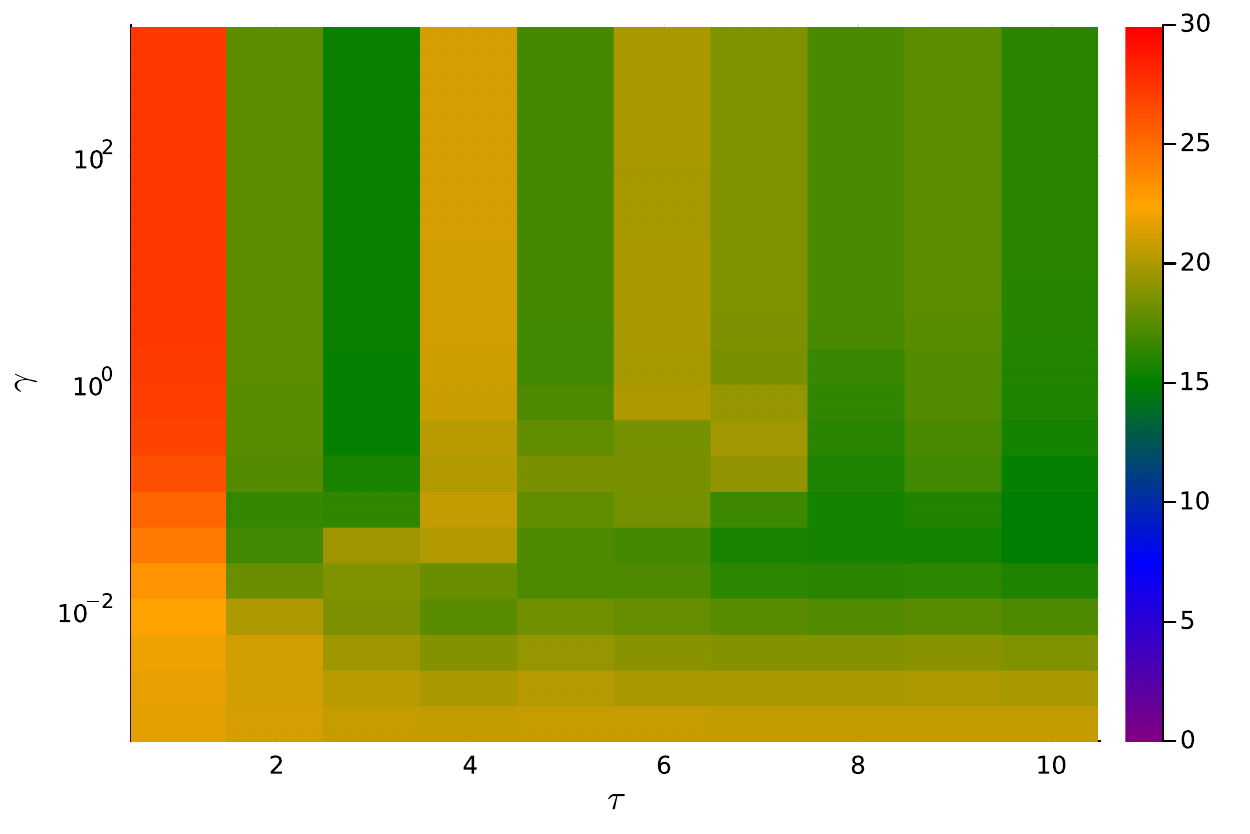}
    \end{subfigure}
    \begin{subfigure}[t]{.45\linewidth}
    \includegraphics[scale=0.33]{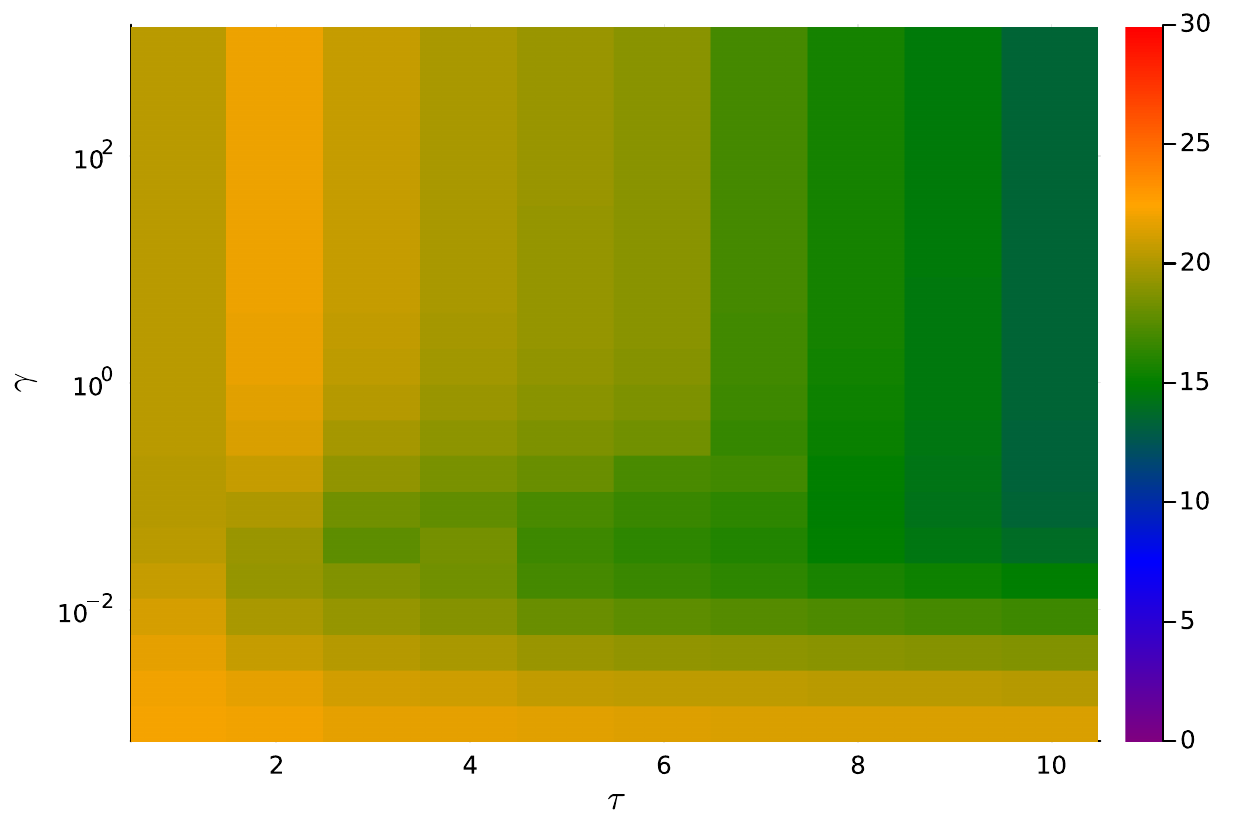}
    \end{subfigure}\\
    \begin{subfigure}[t]{.45\linewidth}
    \includegraphics[scale=0.33]{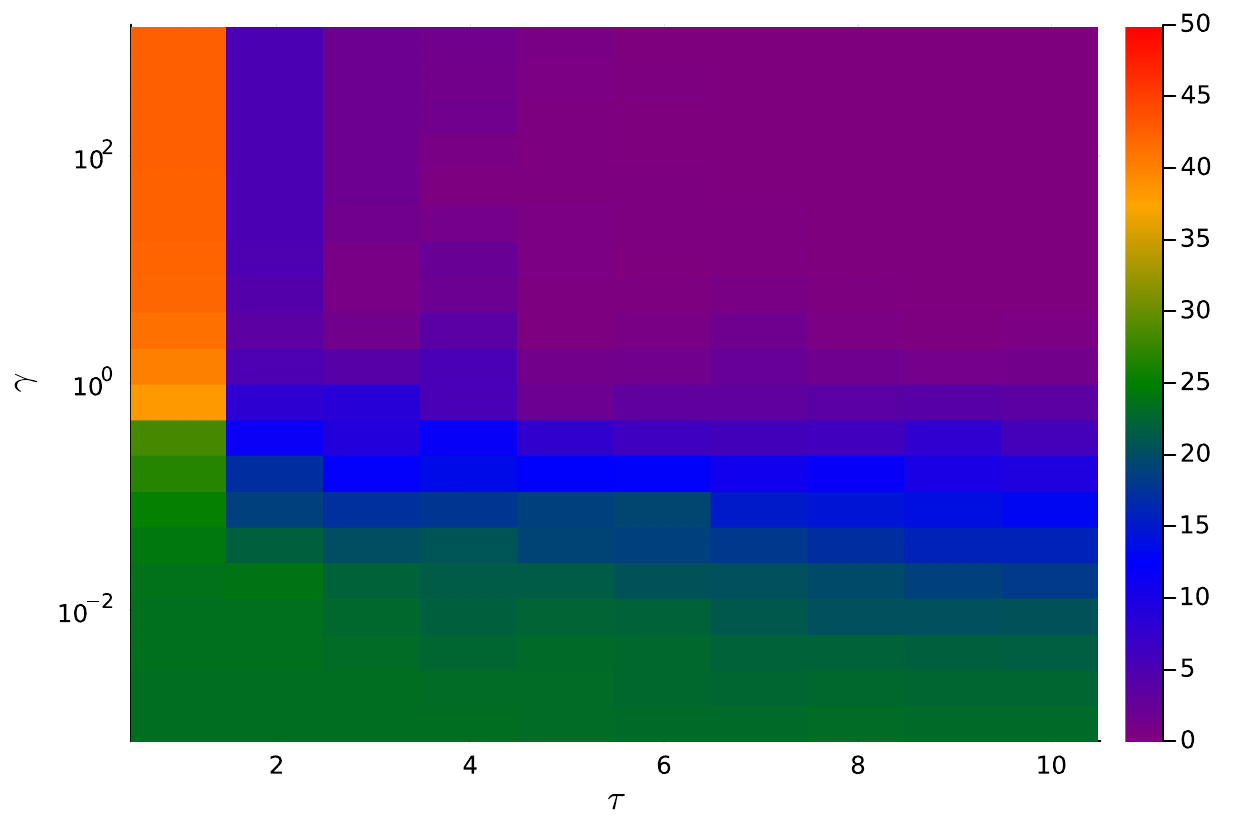}
    \end{subfigure}
    \begin{subfigure}[t]{.45\linewidth}
    \includegraphics[scale=0.33]{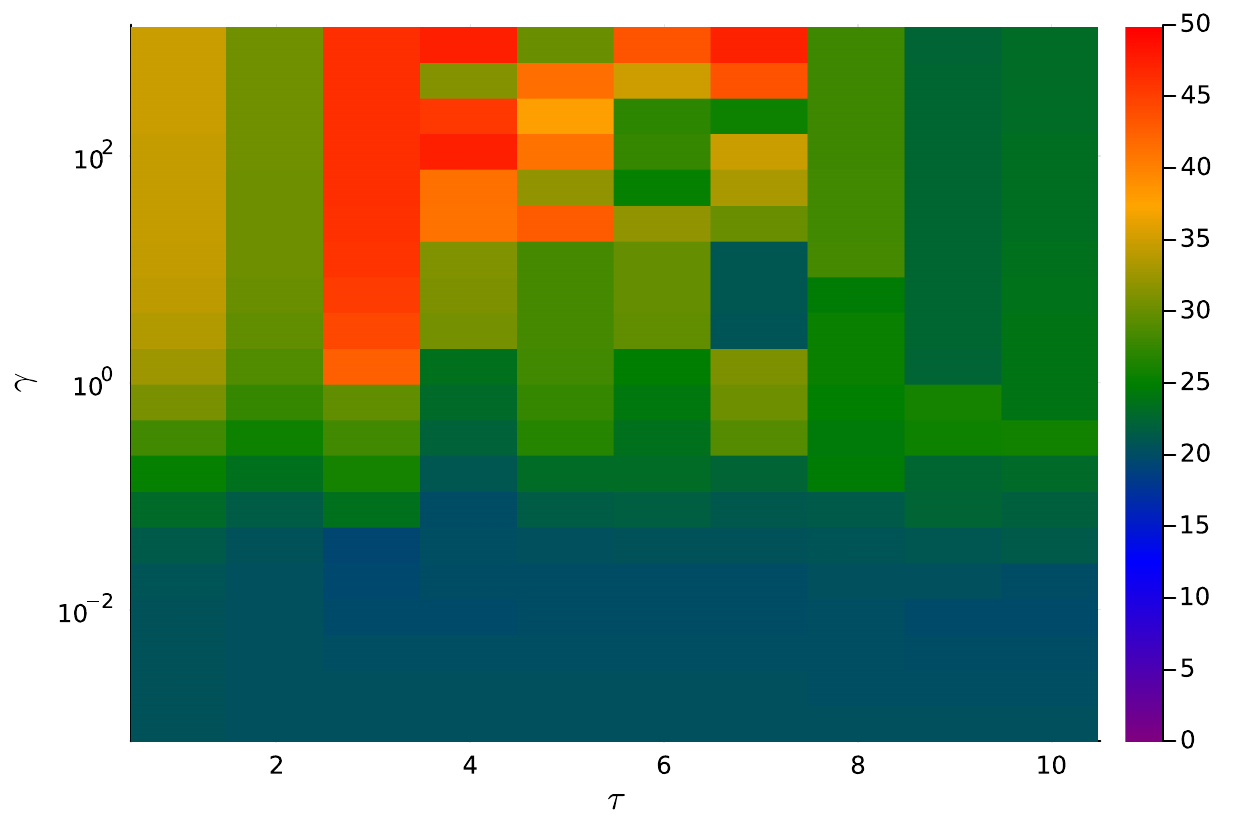}
    \end{subfigure}
    \caption{Leave-one-out (LOOCV, left) and test (right) error for varying $\tau$ and $\gamma$, for an overdetermined setting (top, $n=50, p=10$) and an underdetermined setting (bottom, $n=10, p=50$). In the overdetermined setting, LOOCV is a good estimate of the test error for most values of parameters $(\gamma,\tau)$. In contrast, in the underdetermined setting, LOOCV is a poor approximation of the test error, and estimators that minimize LOOCV {($\gamma= 10^3$, $\tau=10$)} significantly disappoint out-of-sample. Our conclusions are identical when using five-fold cross-validation (Appendix \ref{sec.append:heatmap}).}
    \label{fig:l10cv_test_comp}
\end{figure*}

\subsection{Literature Review}


From a statistical learning perspective, there is significant literature 
on quantifying the out-of-sample performance of models with respect to their training and validation error, originating with the works by \cite{vapnik1999overview} on VC-dimension and \cite{bousquet2002stability} on algorithmic stability theory. As noted, for instance, by \citet{ban2019big}, algorithmic stability bounds are generally preferable: they are \textit{a posteriori} bounds with tight constants that depend on only the problem data. In contrast, VC-dimension bounds are \textit{a priori} bounds that depend on computationally intractable constants. A key conclusion from both streams of work is that more stable models tend to disappoint less out-of-sample. We refer to \citet{hardt2016train, stephenson2021can, bates2021cross} for reviews of cross-validation and algorithmic stability.

Relatedly, \citet{kale2011cvstability} introduces a notion of mean-square stability, and shows that, for learning algorithms satisfying this stability property, the variance of the $k$-fold cross-validation error can drop dramatically relative to a single hold-out estimate. Similarly, \citet{limyu2016escv} proposes Estimation Stability with Cross-Validation (ESCV) for tuning the Lasso, namely, augmenting cross-validation with an estimation-stability criterion that tends to select smaller and more stable sparse models for the Lasso. Further, \citet{meinshausen2010stability} define stability differently from this paper—in terms of the probability that a variable is selected in a statistical model—and apply their definition to variable selection in regression. All three works are complementary: rather than using stability to justify CV’s accuracy or to stabilize Lasso model selection, we regularize the CV objective with a hypothesis-stability penalty and tune its weight via nested cross-validation.

More recently, the statistical learning theory literature has been connected to distributionally robust optimization by \cite{ban2019big, gupta2021small, gupta2024debiasing}. Indeed, \citet{ban2019big} proposes solving newsvendor problems by designing decision rules that map features to an order quantity and obtain finite-sample guarantees on out-of-sample costs of newsvendor policies. Closer to our work, \citet{gupta2021small} proposes correcting solutions to high-dimensional problems by invoking Stein's lemma to obtain a Stein's Unbiased Risk Estimator (SURE) approximation of the out-of-sample disappointment. {\color{black}Moreover, they demonstrate that a naive implementation of leave-one-out cross-validation performs poorly with limited data. Building upon this, \citet{gupta2024debiasing} de-bias a model's in-sample performance by incorporating a variance gradient correction term derived via sensitivity analysis. Unfortunately, it is unclear how to extend their approach to our setting, as it applies to problems with linear objectives over subsets of $[0, 1]^n$.
}

\section{Stability Regularized Cross-Validation}\label{sec:stability_adjustment}
{\color{black}
In this section, we propose techniques for improving the performance of cross-validation. First, we define our notation (Section \ref{sec:notation}) and propose a bound on the test set error of a machine learning model in terms of its $k$-fold error and algorithmic stability, which was originally proven for the special case of leave-one-out cross-validation by \cite{bousquet2002stability} (Section \ref{ssec:genbounds_kfold}). By leveraging this bound, we propose a technique for improving the performance of cross-validation, namely nested cross-validation with stability regularization (Section \ref{ssec:nested_cv}) 

\subsection{Setup and Notation}\label{sec:notation}
We consider a generic $k$-fold cross-validation setting for supervised learning problems with $n$ datapoints, and our notation is standard to the machine learning literature. Concretely, we have features $(\bm{x}_1, \ldots, \bm{x}_n) \in \mathcal{X}^n \subseteq \mathbb{R}^{n \times p}$ and response $(y_1, \ldots, y_n) \in \mathcal{Y}^n\subseteq\mathbb{R}^n$, and we assume that the data $(\bm{x}_i, y_i)_{i \in [n]} \in \mathcal{X} \times \mathcal{Y}$ are drawn i.i.d. from some (unknown) stochastic process. We aim to understand how abstract models $\bm{\beta}: \mathcal{X}\rightarrow \mathcal{Y}$ trained on the full dataset $(\bm{X}, \bm{y})$ generalize to other observations from the same stochastic process, by studying $\bm{\beta}$ and related models $\bm{\beta}^{(\mathcal{N}_j)}: \mathcal{X}\rightarrow \mathcal{Y}$ trained on all data apart from the points $i \in \mathcal{N}_j$. We formalize the notion of generalization with a loss function $\ell: \mathcal{Y} \times \mathcal{Y} \rightarrow \mathbb{R}_+$, e.g., $\ell(y, \hat{y})=(\hat{y}-y)^2$. We let $\{\mathcal{N}_j\}_{j \in [k]}$ be a partition of the integers $[n]$ into $k$ disjoint subsets, where $k$ is frequently either $5, 10$ (five-fold and ten-fold cross-validation) or $n$ (leave-one-out cross-validation), and the cardinality of each fold $\mathcal{N}_j$ is typically identical. 

To make the dependence of our abstract models $\bm{\beta}$ on hyperparameters explicit, we let $\bm{\theta} \in \Theta$ be the vector of all hyperparameters that are used to select $\bm{\beta}$, and $\Theta$ denote the set of all possible hyperparameter combinations. We define an abstract model trained using hyperparameters $\bm{\theta}$ by $\bm{\beta}(\bm{\theta})$ for concreteness and assume that this choice is unique for simplicity. Finally, at the risk of overloading notation, we let $\bm{\beta}(\bm{\theta}, \bm{x}_i) \in \mathcal{Y}$ denote the output predicted by model $\bm{\beta}(\bm{\theta})$ with data $\bm{x}_i$.

Given the above notation, the $k$-fold cross-validation error with hyperparameters $\bm{\theta}$ is given by:

{\color{black}
\begin{equation}
h(\bm{\theta}) = \frac{1}{n}\sum_{j=1}^k \sum_{i \in \mathcal{N}_j}\ell\left(y_i, \beta^{(\mathcal{N}_j)}(\bm{\theta}, \bm{x}_i)\right)
\end{equation}
Moreover, for each $j\in [k]$, we let the $j$th partial $k$-fold error be:
\begin{equation}
\begin{aligned}
\label{eq:partialLOOE}
h_j(\bm{\theta}):=\sum_{i \in \mathcal{N}_j}\ell\left(y_i, \beta^{(\mathcal{N}_j)}(\bm{\theta}, \bm{x}_i)\right).
\end{aligned}
\end{equation}
Therefore, the average $k$-fold error is given by $1/n \sum_{j=1}^k h_j(\bm{\theta})=h(\bm{\theta})$. 

We now define the stability of our models, following \cite{bousquet2002stability}. Specifically, let $\mu_h$ be the hypothesis stability of our learner analogously to \citep[][Definition 3]{bousquet2002stability} but where $k<n$ folds are possible:{
\begin{align}\label{eq:hypothesisStability}\color{black}
\mu_h:=\max_{j \in [k]}\mathbb{E}_{\mathcal{S} \sim \mathcal{D}}\mathbb{E}_{(\bm{x}, y) \sim \mathcal{D}}&\bigg\vert \ell\left(y, \beta^{(\mathcal{N}_j)}(\bm{\theta}, \bm{x})\right)\\
& -\ell\bigg(y, \beta(\bm{\theta}, \bm{x})\bigg)\bigg\vert,\nonumber
\end{align}}
i.e., the expectation over the training set $\mathcal{S}$ sampled from an underlying stochastic process $\mathcal{D}$, and any relevant fold split randomness, evaluated at a point $(\bm{x}, y) \sim \mathcal{D}$ not necessarily contained in the training set. This quantity measures the worst-case average absolute change in the loss after omitting {\color{black}a fold of data. 

Unfortunately, the hypothesis stability involves computing a possibly high-dimensional expectation and thus is $\#$P-hard to compute in general (even when $\bm{\beta}(\bm{\theta})$ is computable in polynomial time) by reduction from two-stage stochastic programming with random recourse \citep[][]{hanasusanto2016comment, bertsimas2020computation}. Moreover, the stochastic process from which $(\bm{x}_i, y_i)$ is drawn is often unknown in practice, and thus cannot be computed even with the help of Monte-Carlo simulation or similar techniques to evaluate high-dimensional integrals. We have the following result:
\begin{proposition}
    Determining the quantity $\mu_h$ in \eqref{eq:hypothesisStability} is $\#$P-hard, i.e., at least as hard as computing the number of optimal solutions to an NP-complete problem, even for the simple case of a binary loss function and i.i.d. discrete random data $(\bm{x}_i, y_i)$.
\end{proposition}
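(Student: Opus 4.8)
The plan is to prove $\#$P-hardness by exhibiting a polynomial-time Turing reduction from the evaluation of the expected optimal value of a two-stage stochastic program with random recourse, which is itself $\#$P-hard \citep{hanasusanto2016comment, bertsimas2020computation} (ultimately via the $\#$P-hardness of lattice-point/knapsack counting under product distributions). The first step is to simplify the target quantity in the binary regime. Specializing to the $0/1$ loss $\ell(y,\hat{y})=\mathbbm{1}[y\neq\hat{y}]$ with $\mathcal{Y}=\{0,1\}$, I observe that for any fixed $\bm{x}$ the two predictors $\beta^{(\mathcal{N}_j)}(\bm{\theta},\bm{x})$ and $\beta(\bm{\theta},\bm{x})$ incur a different loss on the label $y$ if and only if they output different labels; hence the absolute loss difference collapses to $\mathbbm{1}[\beta^{(\mathcal{N}_j)}(\bm{\theta},\bm{x})\neq\beta(\bm{\theta},\bm{x})]$, independently of $y$, and
\[
\mu_h=\max_{j\in[k]}\;\Pr_{\mathcal{S}\sim\mathcal{D}^n,\,(\bm{x},y)\sim\mathcal{D}}\!\left[\beta^{(\mathcal{N}_j)}(\bm{\theta},\bm{x})\neq\beta(\bm{\theta},\bm{x})\right].
\]
Thus, under discrete data, $\mu_h$ is nothing but a \emph{disagreement probability}: a finite, rationally-weighted count over training draws $\mathcal{S}$ and test points $\bm{x}$ of the event that holding out fold $\mathcal{N}_j$ flips the prediction.

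The second step is to plant a counting instance inside the learner. Given an instance of the expected-recourse problem, I would design a discrete distribution $\mathcal{D}$ on $\mathcal{X}\times\mathcal{Y}$ together with hyperparameters $\bm{\theta}$ so that the trained predictor $\beta(\bm{\theta},\bm{x})$ at a distinguished test point is a \emph{threshold} on the optimal value of an optimization problem parameterized by the realized data $\mathcal{S}$ — exactly the structure of a second-stage recourse decision responding to the random first-stage draw $\mathcal{S}$. Holding out the fold $\mathcal{N}_j$ perturbs this optimization in a controlled way, and I would calibrate the construction (the support points, their probabilities, and $\bm{\theta}$) so that the threshold determining $\beta(\bm{\theta},\bm{x})$ flips precisely on the set of realizations the counting problem asks us to enumerate. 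The $\max$ over $j$ is handled by symmetrizing the construction across folds so that the planted fold is the maximizer, or simply by querying the oracle fold-by-fold.

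The third step is recovery of the count. Because all support probabilities are rationals with polynomially many bits, the disagreement probability above is a ratio of integers sharing a common denominator known a priori; multiplying an exact value of $\mu_h$ by this denominator returns the sought integer count. Hence an exact oracle for $\mu_h$ solves the $\#$P-hard instance in polynomial time, which establishes the claim.

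The hard part is to make the learner simultaneously (i) a bona fide, polynomially-computable map from data to a hypothesis, so that the instance is a legitimate learning problem rather than a disguised counting oracle, and (ii) compatible with the \emph{product} structure of $\mathcal{D}^n$: the outer expectation is over i.i.d.\ draws, not an arbitrary distribution over datasets, so the counting instance must be embedded through independent coordinates. Reconciling this product-form requirement with a flip condition whose measure is a $\#$P-hard count — while controlling the interaction between the leave-fold-out perturbation and the threshold — is the delicate part. This is precisely where the cited two-stage results are used: they furnish a knapsack/lattice-counting gadget that remains hard under product distributions, which I would transcribe into the threshold event governing $\beta(\bm{\theta},\bm{x})$.
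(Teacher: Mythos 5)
Your high-level skeleton does match the paper's actual proof: reduce from the $\#$P-hardness of computing $\mathbb{P}(\sum_{i=1}^m X_i \le \alpha)$ for i.i.d.\ discrete random variables \citep[Corollary 2.1]{bertsimas2020computation}, collapse $\mu_h$ under a binary loss to a single flip/disagreement probability, and read off the hard quantity from an exact oracle for $\mu_h$. But there is a genuine gap: the step that carries the entire proof --- your second step --- is never constructed. You defer the design of $\mathcal{D}$, $\bm{\theta}$, the learner, and the flip event to an unspecified ``gadget I would transcribe,'' and you explicitly flag as unresolved the reconciliation between the product structure of $\mathcal{D}^n$ and a $\#$P-hard flip event. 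That reconciliation is not a technicality to be deferred; it is the content of the proposition, and as written your argument only shows that a reduction of this shape is plausible.

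What you missed is that the difficulty dissolves if you reduce from the i.i.d.-sum probability directly rather than through two-stage recourse: since the fold complement of an i.i.d.\ sample is itself an i.i.d.\ sample, the hard event can be realized verbatim with no calibration at all. Concretely (this is the paper's construction): take $y=0$ almost surely and $x\sim X_1$, set $n=2m$ and $k=2$ with deterministic folds $\{1,\dots,m\}$ and $\{m+1,\dots,2m\}$, use the binary loss $\ell(y,\hat y)=\hat y\in\{0,1\}$, and let the learner output the \emph{constant} predictor $\mathbf{1}\{\sum_{z\in T} x(z)\le\alpha\}$ when $|T|=m$ and the constant $0$ when $|T|=2m$. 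The full-sample model then incurs loss $0$ everywhere, the absolute loss difference equals the fold-out model's prediction, and $\mu_h=\mathbb{P}(\sum_{i=1}^m X_i\le\alpha)$ exactly --- one oracle call, no threshold tuning, and no symmetrization over folds, since both folds yield the same value by exchangeability. Two smaller points: your step-1 reduction to a disagreement probability is fine but should note that it presumes deterministic training maps (the definition of $\mu_h$ also averages over fold-partition randomness); and routing the reduction through the expected value of a two-stage stochastic program, as you propose, adds the unaddressed burden of embedding an optimization oracle inside a learner that must remain a legitimate polynomial-time map --- precisely the obligation your sketch leaves undischarged, and one the size-dependent constant-predictor trick avoids entirely.
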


\begin{proof}
    We perform a reduction from \citet[Corollary 2.1]{bertsimas2020computation}, where the authors report that if $X_1, \ldots, X_m$ are i.i.d. discrete random variables with support containing at least $m$ distinct values, then computing $\mathbb{P}(\sum_i X_i \leq \alpha)$ is $\#$P-hard. 

    Construct a supervised-learning distribution $\mathcal{D}$ over $z=(x,y)$ as follows: $y= 0$ almost surely and $x\sim X_1$. Let $n:=2m$ and $k:=2$ with a deterministic, data-independent partition
$N_1=\{1,\ldots,m\}$ and $N_2=\{m+1,\ldots,2m\}$. Let the loss be binary, $\ell(y,\hat y):=\hat y$, where predictions satisfy $\hat y\in\{0,1\}$. Define a learning algorithm $A$ that outputs a {constant} predictor:
given any training multiset $T$,
\[
(A(T))(x)\;:=\;
\begin{cases}
\mathbf{1}\{\sum_{z\in T} x(z)\le \alpha\}, & |T|=m,\\
0, & |T|=2m.
\end{cases}
\]
This algorithm runs in time $O(|T|)$.

Now draw a sample $S=(z_1,\ldots,z_{2m})\sim \mathcal{D}^{2m}$ and let
$S^{(-j)} := (z_i)_{i\notin N_j}$ denote the sample with fold $j$ removed.
By construction, $A(S)= 0$, while for each $j\in\{1,2\}$,
$A(S^{(-j)})= \mathbf{1}\{\sum_{i\notin N_j} x_i \le \alpha\}$, where $x_i$ is the feature in $z_i$.
Since the predictors are constant, the inner expectation over an independent $z\sim\mathcal{D}$ is immaterial and
\begin{align*}
\mu_h
= &\max_{j\in\{1,2\}} \mathbb{E}_{S\sim\mathcal{D}^{2m}}\mathbb{E}_{z\sim\mathcal{D}}
\Bigl[\bigl|\ell(z,A(S^{(-j)}))-\ell(z,A(S))\bigr|\Bigr]\\
& = \mathbb{P}\!\left(\sum_{i=1}^{m} X_i \le \alpha\right)= p.\qedhere
\end{align*}
\end{proof}

Thus, to avoid the cost of computing the potentially unknowable and $\#$P-hard quantity $\mu_h$, \textit{we propose approximating \eqref{eq:hypothesisStability} via the empirical hypothesis stability} analogously to \citep[][Definition 3]{bousquet2002stability} but where $k<n$ folds are possible:
\begin{align}\label{eq:hypothesisStability2}\color{black}
	\hat{\mu}_h:=\max_{j\in [k]}\frac{1}{n}\sum_{i=1}^n \left\vert \ell\left(y_i, \beta^{(\mathcal{N}_j)}(\bm{\theta}, \bm{x}_i)\right)-\ell\left(y_i, \beta(\bm{\theta}, \bm{x}_i)\right)\right\vert.
\end{align}
This can be viewed as a sample-average approximation of the hypothesis stability \citep[see][for a general theory]{shapiro2021lectures}, and thus it converges almost surely to the true hypothesis stability as $n \rightarrow \infty$ under the usual assumptions of the sample-average-approximation method \cite{{shapiro2021lectures}}. 

We remark that the above empirical hypothesis stability differs from the pointwise hypothesis stability defined by \citep[Definition 4]{bousquet2002stability} as we average over all data points, rather than only data points omitted when training $\bm{\beta}^{(\mathcal{N}_j)}$. This is because the pointwise stability only gives generalization bounds on the training error \cite{bousquet2002stability}, while we are interested in bounds on the kCV error.

\subsection{Generalization Bound}\label{ssec:genbounds_kfold}
By combining our definitions and notation, letting $M$ represent an upper bound on the loss $\ell(y_i, \bm{\beta}(\bm{\theta}, \bm{x}_i))$ for any model $\bm{\beta}(\bm{\theta})$ and any data point $(\bm{x}_i, y_i)$ (e.g., if $(\bm{x}_i, y_i)$ are drawn from a bounded domain), the following result follows from Chebyshev's inequality and upper bounds the test error $\mathbb{E}_{(x,y)\sim D}[\ell\left(y, \beta(\bm{\theta}, \bm{x})\right)]$ in terms of the $k$-fold cross-validation error and the hypothesis stability (proof deferred to Section \ref{append.genboundproof}):
\begin{theorem}\label{thm:genbound}
Suppose the training data $(\bm{x}_i, y_i)_{i \in [n]}$ are drawn from an unknown distribution $\mathcal{D}$ such that $M$ and $\mu_h$ are finite constants. Further, suppose $n$ is exactly divisible by $k$ and each $\mathcal{N}_j$ is of cardinality $n/k$. Then, the following bound on the test error holds with probability at least $1-\delta$ (over the draw of the training sample, and any fold-partition randomness):
\begin{align}\label{l10ucb}
    & \mathbb{E}_{(x,y)\sim D}[\ell\left(y, \beta(\bm{\theta}, \bm{x})\right)] \\
    & \leq \frac{1}{n} \sum_{j \in [k]} h_j(\bm{\theta})+\sqrt{\frac{M^2+6 M k \mu_h}{2 k \delta}}.\nonumber
\end{align}

\end{theorem}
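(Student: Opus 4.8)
The plan is to follow the template of \citet{bousquet2002stability} for leave-one-out stability, adapted to folds of size $n/k$. Write $R(\bm{\theta}):=\mathbb{E}_{(\bm{x},y)\sim\mathcal{D}}[\ell(y,\beta(\bm{\theta},\bm{x}))]$ for the (sample-dependent) test error and $\hat{h}(\bm{\theta}):=\frac1n\sum_{j\in[k]}h_j(\bm{\theta})$ for the kCV error. Since the statement says the bound \emph{follows from Chebyshev's inequality}, the first step is to reduce the claim to a second-moment estimate: applying Markov's inequality to $(R-\hat{h})^2$ gives $\mathbb{P}\big(R(\bm{\theta})-\hat{h}(\bm{\theta})\ge \epsilon\big)\le \mathbb{P}\big((R-\hat{h})^2\ge\epsilon^2\big)\le \mathbb{E}_{\mathcal{S}}[(R-\hat{h})^2]/\epsilon^2$, and equating the right-hand side to $\delta$ and solving for $\epsilon$ reproduces exactly the $\sqrt{(\,\cdot\,)/(2k\delta)}$ form in \eqref{l10ucb}. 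Hence it suffices to prove the variance-type bound $\mathbb{E}_{\mathcal{S}}[(R-\hat{h})^2]\le \frac{M^2}{2k}+3M\mu_h$, which is where all the work lies. (The $1/\delta$, rather than $\log(1/\delta)$, dependence is the signature of this second-moment route, as opposed to a McDiarmid-style concentration argument.)

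Next I would expand $\mathbb{E}_{\mathcal{S}}[(R-\hat{h})^2]=\mathbb{E}[R^2]-2\,\mathbb{E}[R\hat{h}]+\mathbb{E}[\hat{h}^2]$ and use the i.i.d. assumption together with the hypotheses that $n$ is divisible by $k$ and every fold has cardinality $n/k$. Exchangeability of the samples within a fold, and of the folds themselves, collapses each of the three terms into a small number of canonical expectations: products of the full-model loss at independent fresh points, of held-out-fold losses at their held-out points, and their cross products. The ``diagonal'' contributions---the expected squares of held-out losses---are bounded using only the uniform loss bound $M$; this is precisely the piece that yields the $\frac{M^2}{2k}$ term, with the $1/(2k)$ arising from the per-fold averaging structure.

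The remaining off-diagonal and cross terms are then controlled by stability. Each pair of canonical terms differs only by replacing the full model $\beta(\bm{\theta})$ with a fold-omitted model $\beta^{(\mathcal{N}_j)}(\bm{\theta})$ at a matched evaluation point; by the definition of hypothesis stability in \eqref{eq:hypothesisStability} each such swap changes the expected loss by at most $\mu_h$, and since the losses are bounded by $M$ a product telescopes as $|ab-a'b'|\le M(|a-a'|+|b-b'|)$. Aggregating these $O(1)$-many bounded differences produces the $3M\mu_h$ term, and summing with the diagonal gives the claimed second-moment bound.

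The main obstacle is the bookkeeping in the last two steps rather than any single inequality. Two points are delicate. First, the exchangeability reduction must be carried out at the fold level, and it is here that equal fold cardinality $n/k$ and divisibility of $n$ by $k$ are genuinely needed. Second, and most subtle, is matching evaluation points: the held-out loss $\ell(y_i,\beta^{(\mathcal{N}_j)}(\bm{\theta},\bm{x}_i))$ is evaluated at a point $i\in\mathcal{N}_j$ that trains the full model $\beta(\bm{\theta})$ but \emph{not} $\beta^{(\mathcal{N}_j)}(\bm{\theta})$, whereas $\mu_h$ in \eqref{eq:hypothesisStability} compares the two models at a fresh point; reconciling these requires invoking i.i.d.-ness so that, in expectation, a held-out point behaves like a fresh draw. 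Finally, pinning down the exact constants $\tfrac12$ and $3$ requires the direct expansion above rather than the lossy inequality $(a+b)^2\le 2a^2+2b^2$, so the constant tracking, while routine, must be done with care.
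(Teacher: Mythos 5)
Your proposal is correct in outline and shares the paper's top-level skeleton exactly: both reduce the claim via Markov's inequality applied to $(R-R_{\mathrm{CV}})^2$, and both target the identical second-moment bound $\mathbb{E}\big[(R-R_{\mathrm{CV}})^2\big]\le \frac{M^2}{2k}+3M\mu_h$, which rearranges to the stated $\sqrt{(M^2+6Mk\mu_h)/(2k\delta)}$. Where you diverge is in how that variance bound is obtained. You propose re-deriving it from scratch by expanding $\mathbb{E}[R^2]-2\,\mathbb{E}[R\,R_{\mathrm{CV}}]+\mathbb{E}[R_{\mathrm{CV}}^2]$ with fold-level exchangeability and matched model swaps bounded via $|ab-a'b'|\le M(|a-a'|+|b-b'|)$ --- essentially re-proving \citet[Lemma 9]{bousquet2002stability} in the $k$-fold setting. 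The paper instead short-circuits this bookkeeping with a block-lifting reduction: each fold $B_j=(z_i)_{i\in\mathcal{N}_j}$ is treated as a single datum with block loss $L(f,B):=\frac{1}{m}\sum_{r=1}^m \ell(z_r,f)$, so that $k$-fold CV becomes \emph{exactly} leave-one-out over a sample of $k$ blocks; a one-line triangle-inequality argument shows the block-level hypothesis stability is at most $\mu_h$, and then Lemma 9 is invoked off the shelf with $n$ replaced by $k$. Notably, this reduction dissolves both of the delicate points you flag: fold-level exchangeability is automatic once folds are data points, and your ``most subtle'' issue --- that the held-out loss evaluates $\beta^{(\mathcal{N}_j)}(\bm{\theta})$ at a point lying in the training set of $\beta(\bm{\theta})$ --- disappears because a held-out block is a genuinely fresh draw $B_j\sim\mathcal{D}^m$ for the fold-omitted model, which is precisely the configuration Lemma 9's hypothesis-stability assumption requires. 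Your route would work, but the constant-tracking you rightly identify as the main obstacle (the $\tfrac12$ and the $3$) is exactly what the citation absorbs; if you do carry out the expansion yourself, make the within-fold averaging step explicit (bounding $\frac{1}{m}\sum_{r}\mathbb{E}\big|\ell(z_r,\beta^{(\mathcal{N}_j)}(\bm{\theta}))-\ell(z_r,\beta(\bm{\theta}))\big|$ by $\mu_h$ at fresh points), since that is the only place the equal-cardinality hypothesis $|\mathcal{N}_j|=n/k$ is actually used.
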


\begin{remark}[To Train or to Validate in \eqref{l10ucb}]\textit{
A similar bound to \eqref{l10ucb} can be derived using the empirical risk instead of kCV \citep[Theorem 11]{bousquet2002stability}. However, this bound has a larger constant ($12$ instead of $6$) and still involves the expensive pointwise hypothesis stability defined by \citet{bousquet2002stability}.} 
\end{remark}


Theorem \ref{thm:genbound} reveals that, if the number of folds $k$ increases with $n$, $M$ is finite, and the hypothesis stability $\mu_h$ decreases with $n$, then the kCV error generalizes to the test set with high probability as $n$ becomes large. Moreover, when models have the same cross-validation error, hypothesis stability, and loss bound $M$, training on more folds results in a stronger generalization bound. 

We remark that Theorem \ref{thm:genbound} implicitly justifies the use of regularization in machine learning, because regularization implicitly controls the hypothesis stability $\mu_h$, leading to better generalization properties when $\mu_h$ is lower. Indeed, \citet[Theorem 22]{bousquet2002stability} provides a result formalizing this notion in the context of supervised learning for Reproducing Kernel Hilbert Spaces.

\color{black}%
Unfortunately, in preliminary experiments, we found that Equation \eqref{l10ucb}'s bound is often excessively conservative in practice, especially when $n \gg p$. This conservatism stems from using Chebyshev's inequality in the proof of Theorem \ref{thm:genbound}, which is known to be tight for discrete measures but excessively conservative over continuous measures \citep{bertsimas2005optimal}, especially unimodal continuous measures \cite{van2016generalized}. Thus, motivated by robust optimization, where probabilistic guarantees are used to motivate uncertainty sets but less stringent guarantees are used in practice to avoid excessive conservatism \citep[see][Section 3, for a discussion]{gorissen2015practical}, we leverage Theorem \ref{thm:genbound} to propose a new approach to cross-validation in the rest of this section.

\subsection{Stability Regularization and Nesting}\label{ssec:nested_cv}
Motivated by the idea that more stable models are less likely to disappoint out-of-sample (Theorem \ref{thm:genbound}), we propose selecting models that minimize a weighted sum of the kCV error and the empirical hypothesis stability. This corresponds to selecting $\bm{\theta}$ through the optimization problem 
\begin{align}\label{eq:param_opt_2}&\bm{\theta} \in \argmin_{\bm{\theta}\in \bm{\Theta}} \frac{1}{n} \sum_{j \in [k]} h_j(\bm{\theta})+ \lambda \hat{\mu_h}(\bm{\theta}).
\end{align}

Notably, this procedure still requires a hyperparameter $\lambda$, which the user must select. Accordingly, we invoke nested cross-validation, which has been empirically shown to be significantly less vulnerable to out-of-sample disappointment than regular cross-validation \cite{cawley2010over,bates2021cross}. We perform an outer loop over candidate $\lambda$ values. For each $\lambda$, we run an inner (nested) k-fold cross-validation: on each fold, train a model with that $\lambda$ (using the remaining folds with stability regularization) and measure its validation error. We then choose the $\lambda$ that yields the lowest average outer validation error across the $k$ folds. 

Specifically, let $\mathcal{N}_{j,l}:=\mathcal{N}_j \cup \mathcal{N}_l$ denote the data contained in the $j$th or $l$th fold of the training set. Then, we select $\lambda$ by solving 
\begin{align}
    \lambda \in & \arg \min_{\lambda \in \bm{\Lambda}} \sum_{j \in [k]}\sum_{i \in \mathcal{N}_j}\ell(y_i, \bm{\beta}^{(\mathcal{N}_j)}(\bm{\theta}_j^\star, \bm{x}_i)),
\end{align}

where $\bm{\beta}^{(\mathcal{N}_j)}(\bm{\theta}_j^\star)$ denotes a model trained on all data but the fold $\mathcal{N}_j$ with hyperparameters $\bm{\theta}_j^\star$, and $\bm{\theta}_j^\star$ denotes an optimal solution to the following lower-level problem, which cross-validates $\bm{\theta}$ with the $j$th fold of the data omitted and $\lambda$ fixed, so that $\bm{\theta}^\star_j$ is in the argmin of
\begin{align*}
     \min_{\bm{\theta} \in \bm{\Theta}} \frac{1}{n-\vert \mathcal{N}_j\vert}\sum_{l \in [k]:l \neq j}\sum_{i \in \mathcal{N}_l}\ell(y_i, \bm{\beta}^{(\mathcal{N}_{j,l})}(\bm{\theta}))+\lambda \hat{\mu}_h^{(-j)}(\bm{\theta}),
\end{align*}
where $\hat{\mu}_h^{(-j)}(\bm{\theta})$ is calculated without reference to the $j$th fold of the data, $\mathcal{N}_j$, in this case. This corresponds to selecting $\lambda$ in a manner that ensures that models $\bm{\theta^\star_j}$ perform well on average, as in the standard nested cross-validation paradigm.

Finally, once $\lambda=\lambda^\star$ is fixed, we select $\bm{\theta}$ by minimizing \eqref{eq:param_opt_2}. This selects models that are robust to omitting one fold of the data and tend to perform well out-of-sample, as estimated by nested cross-validation, at the cost of increased hyperparameter selection time. We formalize this procedure in Algorithm \ref{alg:nested_cv} (see Appendix \ref{ssec:pseudocode}). 

\textbf{Complexity analysis:} In terms of the complexity of Algorithm \ref{alg:nested_cv}, let $\vert \Theta\vert$ be the number of hyperparameter candidates, and let $|\Lambda|$ be the number of candidate stability weights. Then, standard $k$-fold cross-validation (kCV) evaluates each hyperparameter combination $\theta$ using $k$ model fits, for a total of $O(k|\Theta|)$ model calls, plus one at the selected model. On the other hand, Algorithm \ref{alg:nested_cv} adds an outer $k$-fold loop to tune $\lambda$; for each $\lambda$, each outer fold $t$, and each evaluated $\theta$, the inner loop fits $1+(k-1)=k$ models to compute both the inner CV score and the stability proxy. Thus, a direct implementation of Algorithm \ref{alg:nested_cv} requires $O(|\Lambda|k^2|\Theta|)$ model fits to select $\lambda^\star$, plus $O((k+1)|\Theta|)$ additional fits to select $\theta^\star$ at the chosen $\lambda^\star$. Relative to standard kCV, this is an overhead of order $|\Lambda|k$ in the number of fitted models. In the special case where the inner search exhaustively evaluates a fixed finite grid $\Theta$, this can be reduced by a factor of $\vert \Lambda\vert$ (for a total overhead factor of $k$) by saving all models and computing the optimal $\lambda$ after one pass of the inner loop. However, for adaptive searches (where the set of evaluated $\theta$ depends on $\lambda$, as occurs in our numerics), the $O(|\Lambda|k^2|\Theta|)$ bound captures the worst-case fit count.


\section{Numerical Experiments}\label{sec:numres}
In this section, we evaluate the numerical performance of the nested regularized cross-validation scheme proposed in Section \ref{sec:stability_adjustment}. All experiments were implemented in \verb|Julia| version $1.9$, using \verb|Mosek| version $11.0$ to solve all conic optimization problems, and conducted on a MacBook Pro with Apple M3 cores and $36$ GB RAM. Where needed, we used the \verb|RCall| package to access \verb|R| version $4.2.1$, which calls certain baseline methods implemented in \verb|R|, via \verb|Julia|.

\subsection{Ridge Regularized Best Subset Selection}
We now benchmark our proposed nested cross-validation scheme on sparse regression for a suite of commonly studied real-world datasets. Specifically, we benchmark a cyclic coordinate descent scheme for $\ell_0$--$\ell_2^2$ sparse regression, where we repeatedly solve the following lower-level problem\footnote{Note that the terms $\beta_j^2/z_j$ implicitly impose the logical constraints $\beta_j=0$ if $z_j=0$ using the perspective reformulation trick \cite{ gunluk2010perspective}.} for different values of the sparsity parameter $\tau$ and the regularization hyperparameter $\gamma$ 
\begin{align*}
	\min_{\bm{\beta}\in \mathbb{R}^p,\bm{z}\in \{0,1\}^p}\;& \frac{1}{2\gamma}\sum_{j=1}^p \frac{\beta_j^2}{z_j}+\frac{1}{2}\|\bm{X}\bm{\beta}-\bm{y}\|_2^2 \ \text{s.t.}\ \sum_{j=1}^p z_j\leq \tau,
\end{align*} 
using the greedy rounding algorithm described by \cite{ bertsimas2021unified} to obtain near-optimal solutions in a practically tractable amount of time. In particular, we iteratively minimize the $k$-fold cross-validation error with respect to $\tau$ (with $\gamma$ fixed) and with respect to $\gamma$ (with $\tau$ fixed) until we either cycle or exceed a limit of $10$ iterations. After converging, we fit a model to the full dataset with the hyperparameters ($\tau^\star, n_\text{train}\gamma^\star/n)$, where $n_\text{train}$ is the number of observations with one fold of the data left out as in \cite{liu2019ridge}, to account for the extra fold when fitting a model to the full dataset. Moreover, we perform the same procedure for nested $k$-fold cross-validation with stability regularization. Note that for our cyclic coordinate descent schemes, we set the largest permissible value of $\tau$ such that $\tau \log \tau \leq n$, because \citet[Theorem 2.5]{gamarnik2022sparse} demonstrated that, up to constant terms and under certain assumptions on the data generation process, on the order of $\tau \log \tau$ observations are necessary to recover a sparse model with binary coefficients. In preliminary experiments, we relaxed this requirement to $\tau \leq p$ and found that this did not change the optimal value of $\tau$. Moreover, we use a grid size of $20$ values of $\gamma$ log-uniformly distributed over $[10^{-3}, 10^3]$, and $10$ values of $\lambda$ log-uniformly distributed over $[10^{-4}, 10^4]$. 

We compare against the following methods as a benchmark, using in-built functions to approximately minimize the cross-validation loss, and subsequently fit a regression model on the entire dataset with these cross-validated parameters. Note, however, that we are mainly interested in the performance of sparse ridge regression with and without stability regularization. For all methods, we use five folds:
\begin{itemize}
    \item The \verb|ElasticNet| method in the \verb|GLMNet| package, with grid search on their parameter $\alpha \in \{0, 0.1, 0.2, \ldots, 1\}$. 
    \item The Minimax Concave Penalty (MCP) and Smoothly Clipped Absolute Deviation Penalty (SCAD) as implemented in the \verb|R| package \verb|ncvreg|, using the \verb|cv.ncvreg| function and default parameters.
    \item The \verb|L0Learn.cvfit| method implemented in the \verb|L0Learn| \verb|R| package \citep[cf.][]{hazimeh2020fast}, with a grid of $10$ different values of $\gamma$ and default parameters otherwise. 
\end{itemize}

For each dataset, we repeat the following procedure ten times to reduce the variance of our results: we randomly split the data into $90\%$ training/validation data and $10\%$ testing data, and report the average sparsity of the cross-validated model, the method's estimate of the MSE (kCV or nested kCV error) and the average test set MSE (using the same splits for all methods to reduce variance). We also report summary statistics in terms of the average percentage improvement of the nested procedure compared to the MSE without nesting, in order that each dataset is weighted equally. For each method, we retrain on the full training/validation dataset after cross-validation.

Table \ref{tab:comparison_ourmethods}  depicts the dimensionality of each dataset, the average $k$-fold cross-validation error (``CV'') or nested $k$-fold cross-validation error (``nCV''), the average test set error (``MSE''), and the sparsity attained by our coordinate descent scheme without any stability regularization or nesting (kCV), our cyclic coordinate descent with nested stability-regularized cross-validation (nested-kCV), and the performance of MCP on each dataset. We remark that the average (geometric mean over the average for each dataset) runtime was $14.8$ seconds for sparse regression, $592$ seconds for sparse regression with nesting, and under $1$ second for all other methods. Table \ref{tab:comparison_ourmethods2} (Appendix \ref{append:ridgeregression_supplementary}) also shows the performance of other methods from the literature (SCAD, GLMNet, and L0Learn) in the same datasets. 

We measure the improvement from nested cross-validation vs. $k$-fold cross-validation by computing the average MSE ratio for each dataset $d$, $r_d:=\text{MSE}_{nested,d}/\text{MSE}_{kCV,d}$, then aggregating by taking the geometric mean of $r_d$ to compute an average improvement (accounting for the fact that percentage improvement is an asymmetric measure).

\begin{table*}[h]
\centering\footnotesize
\begin{tabular}{@{}l r r r r r r r r r r r @{}} \toprule
Dataset & n & p & \multicolumn{3}{c@{\hspace{0mm}}}{kCV} & \multicolumn{3}{c@{\hspace{0mm}}}{nested-kCV} & \multicolumn{3}{c@{\hspace{0mm}}}{MCP} \\
\cmidrule(l){4-6} \cmidrule(l){7-9} \cmidrule(l){10-12}   &   &   & $\tau$ & CV & MSE & $\tau$ &  nCV & MSE &$\tau$ & CV & MSE \\\midrule
Wine & 6497 & 11 &  9.3 & 	0.544 & 	0.543 & 	9.3 & 	0.545 & 	0.542 & 	11 & 	0.543 & 	0.542 \\
Housing & 506 & 13 & 11 & 	23.60 & 	23.68 & 	11 & 	23.73 & 	23.70 & 	11 & 	23.79 & 	23.66 \\
AutoMPG & 392 & 25 & 18.5 & 	8.524 & 	8.608 & 	18 & 	8.647 & 	8.703 & 	18.7 & 	9.051 & 	8.828 \\
Hitters & 263 & 19 & 11.7 & 	0.076 & 	0.082 & 	14.7 & 	0.080 & 	0.080 & 	11.5 & 	0.077 & 	0.082 \\
Prostate & 97 & 8 & 5.1 & 	0.529 & 	0.559 & 	4.8 & 	0.571 & 	0.549 & 	7.1 & 	0.570 & 	0.552 \\
Servo & 167 & 19 & 13.8 & 	0.746 & 	0.771 & 	15.4 & 	0.795 & 	0.715 & 	12 & 	0.752 & 	0.705 \\
Toxicity & 38 & 9 & 3.8 & 	0.037 & 	0.054 & 	3.8 & 	0.044 & 	0.057 & 	2.7 & 	0.050 & 	0.060 \\
Steam & 25 & 8 & 2.8 & 	0.404 & 	0.467 & 	2.8 & 	0.565 & 	0.426 & 	2.7 & 	0.511 & 	0.684 \\
Alcohol2 & 44 & 21 &   3.6 & 	0.210 & 	0.472 & 	2.7 & 	0.266 & 	0.229 & 	2.1 & 	0.232 & 	0.273 \\\midrule
TopGear & 242 & 373 & 41.3 & 	0.037 & 	0.050 & 	34.9 & 	0.055 & 	0.062 & 	8.1 & 	0.057 & 	0.066 \\
Bardet & 120 & 200 & 23.5 & 	0.007 & 	0.010 & 	25.3 & 	0.009 & 	0.010 & 	5.3 & 	0.009 & 	0.011 \\
Vessel & 180 & 486 & 28.8 & 	0.016 & 	0.027 & 	29.2 & 	0.024 & 	0.027 & 	2.7 & 	0.036 & 	0.036 \\
Riboflavin & 71 & 4088 &  13.4 & 	0.163 & 	0.299 & 	13.8 & 	0.269 & 	0.297 & 	7.5 & 	0.319 & 	0.229 \\ 
\bottomrule
\end{tabular}
\caption{
Average performance of methods across a suite of real-world datasets where the ground truth is unknown (and may not be sparse), sorted by how overdetermined the dataset is ($n/p$), and separated into the underdetermined and overdetermined cases. 
}
\label{tab:comparison_ourmethods}
\end{table*}

We observe that across the overdetermined datasets, nested cross-validation improves the out-of-sample MSE of sparse regression by $10.0\%$ on average, while across the underdetermined datasets it worsens the out-of-sample MSE by $5.92\%$ on average, for an overall improvement of $4.85\%$ on average. Notably, the hyperparameters selected by both methods agree $67\%$ of the time, meaning the average improvement conditioned on instances selected when the hyperparameters selected by both methods disagree is above $10\%$. Moreover, the nested kCV error is, on average, $0.9\%$ smaller than the test set error for the stability-regularized method, while the kCV error is on average $21.8\%$ smaller than the test set error for sparse ridge regression. This is especially visible for the most underdetermined datasets (Vessel and Riboflavin), where the kCV error for sparse ridge regression is lower than all other methods, but this does not translate to better out-of-sample performance. In contrast, the nested kCV error is a substantially more accurate estimator of the test set error. 

\subsection{Tree-Based Methods}
We now empirically validate our nested cross-validation scheme on a suite of tree-based methods, for the same datasets as those studied in the previous section. The goal of this section is to establish that stability-regularization also improves the performance of tree-based methods.

We benchmark cyclic coordinate descent for the following two methods, using $10$ values of $\lambda$ log-uniformly distributed over $[10^{-4}, 10^4]$ for our nesting procedure:
\begin{itemize}
    \item A Julia implementation of CART \cite{breiman2017classification} via the \verb|DecisionTree.jl| package, where we iteratively optimize the $5$-fold and nested $5$-fold cross-validation error with respect to the tree depth over a grid of the integers $\{1, \ldots, 10\}$ and the \verb|min_samples| parameter over a grid of the integers $\{2, \ldots, 10\}$, with the tree depth initially fixed to $5$ for both approaches. Note that the \verb|DecisionTree.jl| package holds all unspecified parameters to their default values, and thus all remaining CART parameters will take default parameters in this experiment.
    \item A Julia implementation of XGBoost \cite{chen2016xgboost} via the \verb|XGBoost.jl| package, where we iteratively optimize the $5$-fold and nested $5$-fold cross-validation error with respect to the \verb|max_depth| parameter over a grid of the integers $\{1, \ldots, 10\}$ and the \verb|subsample| parameter over the grid $\{0.01, 0.02, 0.03, \ldots, 1.0\}$, with the tree depth initially fixed to $5$ for both approaches.
\end{itemize}

Table \ref{tab:comparison_ourmethods_3} depicts the dimensionality of each dataset, the average $k$-fold cross-validation error (``CV'') or nested $k$-fold cross-validation error (``nCV''), and the average test set error (``MSE''), with and without nested cross-validation. For both methods, the first two columns correspond to $k$-fold cross-validation, and the last two columns to nested stability-regularized cross-validation. Note that we repeat our train/test split procedure 50 times for each dataset and report the average to reduce the variance in our results. 

As in the previous experiment, we measure the improvement from nested cross-validation vs. $k$-fold cross-validation by computing the average MSE across each dataset, normalizing by the MSE of $k$-fold cross-validation on that dataset to compute a percentage improvement across each dataset, and then taking the geometric mean across all datasets (to account for the fact that percentage improvement is an asymmetric measure). The average (geometric mean over averages for each dataset) runtime was $8.07$s (resp. $377.1$s) for XGBoost without (with) nesting, and $<0.01$s (resp. $<0.01$s) for CART with/without nesting. 

\begin{table*}[h!]
\centering\footnotesize
\begin{tabular}{@{}l r r r r r r r r r r @{}} \toprule
Dataset & n & p & \multicolumn{4}{c@{\hspace{0mm}}}{CART} & \multicolumn{4}{c@{\hspace{0mm}}}{XGBoost} \\
\cmidrule(l){4-7} \cmidrule(l){8-11}   &   &   & CV & MSE & nCV & MSE & CV & MSE & nCV & MSE \\\midrule
Wine & 6497 & 11 &  0.534 & 	0.531 & 	0.539 & 	0.531 & 	0.422 & 	0.405 & 	0.429 & 	0.405 \\ 
Housing & 506 & 13 & 17.385 & 	19.183 & 	18.912 & 	19.040 & 	11.092 & 	12.464 & 	12.155 & 	12.464 \\ 
AutoMPG & 392 & 25 & 15.351 & 	15.878 & 	16.55 & 	16.004 & 	10.36 & 	11.872 & 	11.395 & 	11.628 \\ 
Hitters & 263 & 19 & 0.050 & 	0.057 & 	0.055 & 	0.058 & 	0.038 & 	0.039 & 	0.042 & 	0.039 \\ 
Prostate & 97 & 8 & 0.717 & 	0.788 & 	0.815 & 	0.796 & 	0.581 & 	0.696 & 	0.648 & 	0.679 \\ 
Servo & 167 & 19 & 0.520 & 	0.565 & 	0.656 & 	0.565 & 	0.237 & 	0.237 & 	0.351 & 	0.237 \\ 
Toxicity & 38 & 9 & 0.081 & 	0.118 & 	0.092 & 	0.114 & 	0.066 & 	0.081 & 	0.078 & 	0.081 \\ 
Steam & 25 & 8 & 1.089 & 	1.308 & 	1.47 & 	1.244 & 	0.91 & 	1.064 & 	1.165 & 	1.118 \\ 
Alcohol2 & 44 & 21 & 0.850 & 	1.110 & 	0.915 & 	1.071 & 	1.123 & 	1.264 & 	1.289 & 	1.253 \\   \midrule
TopGear & 242 & 373 & 0.076 & 	0.087 & 	0.086 & 	0.086 & 	0.051 & 	0.052 & 	0.061 & 	0.052 \\ 
Bardet & 120 & 200 & 0.015 & 	0.020 & 	0.017 & 	0.020 & 	0.012 & 	0.014 & 	0.013 & 	0.014 \\ 
Vessel & 180 & 486 & 0.063 & 	0.149 & 	0.084 & 	0.136 & 	0.111 & 	0.095 & 	0.136 & 	0.095 \\ 
Riboflavin & 71 & 4088 & 0.719 & 	0.979 & 	0.817 & 	0.904 & 	0.404 & 	0.463 & 	0.496 & 	0.465 \\    
\bottomrule
\end{tabular}
\caption{
Average performance of methods across a suite of real-world datasets where the ground truth is unknown (and may not be sparse), sorted by how overdetermined the dataset is ($n/p$), and separated into the underdetermined and overdetermined cases. For both methods, the first two columns correspond to $k$-fold cross-validation, and the last two columns to nested stability-regularized cross-validation. 
}
\label{tab:comparison_ourmethods_3}
\end{table*}

We observe that for XGBoost, there is no benefit to the nested cross-validation procedure (average improvement of $-0.6\%$), and we select the same hyperparameters whether or not we account for stability via nested cross-validation on $92\%$ of instances. This is likely because XGBoost generates stable models by default (i.e., omitting one fold of the data hardly changes its predictions), so explicitly accounting for model stability does not improve performance.

However, there is a benefit to nested cross-validation for CART: it improves the out-of-sample MSE by $2.2\%$ on average, with a $1.1\%$ average improvement on overdetermined datasets, and a $4.8\%$ average improvement on underdetermined datasets. Because both methods selected the same hyperparameters on $51\%$ of all instances, this corresponds to an average improvement of over $4\%$ on instances where the two methods did not select the same hyperparameters. Moreover, the average test set MSE is within $6.2\%$ of the average nested cross-validation error for CART (and $7.4\%$ for XGBoost), vs. $24.7\%$ inaccurate for $k$-fold cross-validation (and $8.1\%$ for XGBoost), and thus nested cross-validation also reduces out-of-sample disappointment.

\section{Conclusion}\label{sec:conclusion}
In this work, we proposed a new approach to hyperparameter selection, namely selecting hyperparameters that minimize a weighted sum of the cross-validation error and the empirical hypothesis stability, with the weight in the weighted sum selected via a nested cross-validation procedure. Across a suite of real-world datasets, our approach improves the out-of-sample MSE by $4\%$ on average for sparse ridge regression and $2\%$ on average for CART, but does not improve XGBoost's performance. It also dramatically reduces the user's out-of-sample disappointment.

Future work could involve developing a tighter bound on the test set error than the one derived by \cite{bousquet2002stability}. It would also be interesting to investigate the performance of our nested cross-validation procedure across a broader range of contexts (e.g., neural networks), and to more extensively quantify the role that algorithmic stability plays in out-of-sample performance.


\section*{Impact Statement}
This paper presents work aimed at advancing the field of Machine Learning. There are many potential societal consequences of our work, none of which we feel must be specifically highlighted here.\FloatBarrier

\bibliographystyle{icml2026}

\appendix
\newpage
\onecolumn

\section{Data Generation Process for Heatmaps}\label{sec:datagenprocess}
Our data generation process for the motivating example follows the data generation process used by \cite{bertsimas2020sparse2} and is as follows:
\begin{enumerate}
\item The rows of the model matrix are generated i.i.d. from a $p$-dimensional multivariate Gaussian distribution $\mathcal{N}(\bm{0},\bm{\Sigma})$, where $\Sigma_{ij}=\rho^{|i-j|}$ for all $i,j\in [p]$.
\item A ``ground-truth" vector $\bm{\beta}_{\text{true}}$ is sampled with exactly $\tau_{\text{true}}$ non-zero coefficients. The position of the non-zero entries is randomly chosen from a uniform distribution, and the value of the non-zero entries is either $1$ or $-1$ with equal probability.
\item The response vector is generated as $\bm{y}=\bm{X\beta_{\text{true}}}+\bm{\varepsilon}$, where each $\varepsilon_i$ is generated i.i.d. from a scaled normal distribution such that $\sqrt{\nu}=\|\bm{X\beta_{\text{true}}}\|_2/\|\bm{\varepsilon}\|_2$. 
\item We standardize $\bm{X}, \bm{y}$ to normalize and center them.
\item We generate a separate test set of $n_{\text{test}}=10,000$ observations drawn from the same underlying stochastic process to measure test set performance, and normalized using the same coefficients as the training set. 
\end{enumerate}

\section{Heatmaps From Globally Minimizing Five-Fold Cross-Validation Error}\label{sec.append:heatmap}
We now revisit the problem setting considered in Figure \ref{fig:l10cv_test_comp}, using five-fold rather than leave-one-out cross-validation (Figure \ref{fig:fivefoldcv_test_comp}). Our conclusions remain consistent with Figure \ref{fig:l10cv_test_comp}. 
\begin{figure}[h!]
    \centering
    \begin{subfigure}[t]{.45\linewidth}
    \includegraphics[scale=0.33]{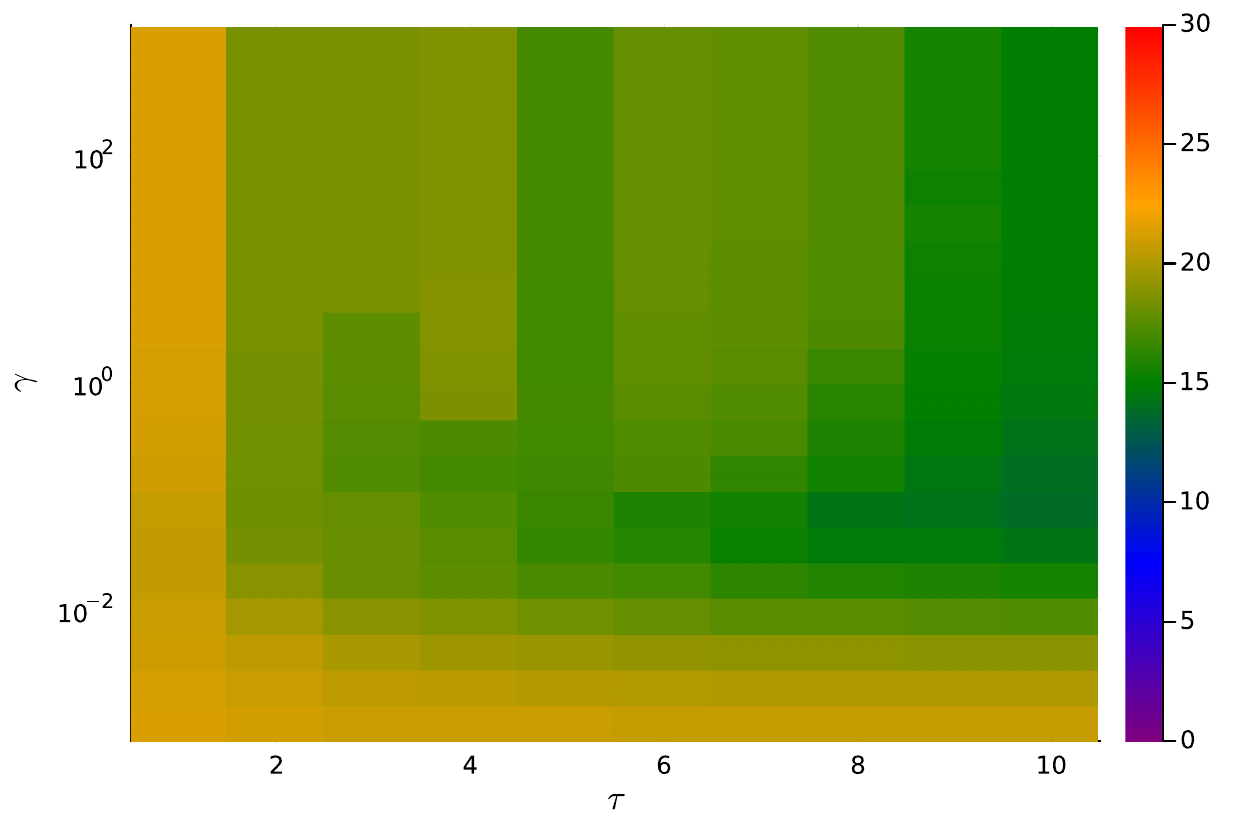}
    \end{subfigure}
    \begin{subfigure}[t]{.45\linewidth}
    \includegraphics[scale=0.33]{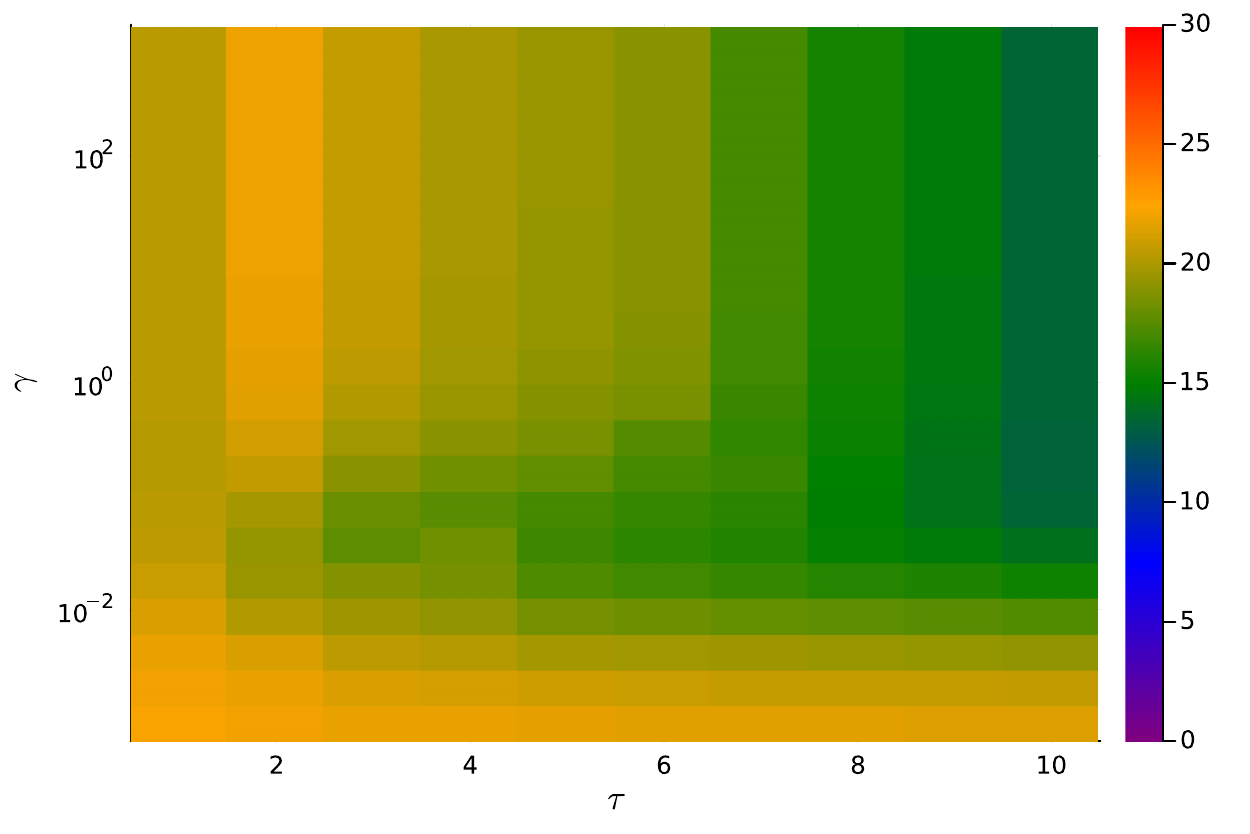}
    \end{subfigure}\\
    \begin{subfigure}[t]{.45\linewidth}
    \includegraphics[scale=0.33]{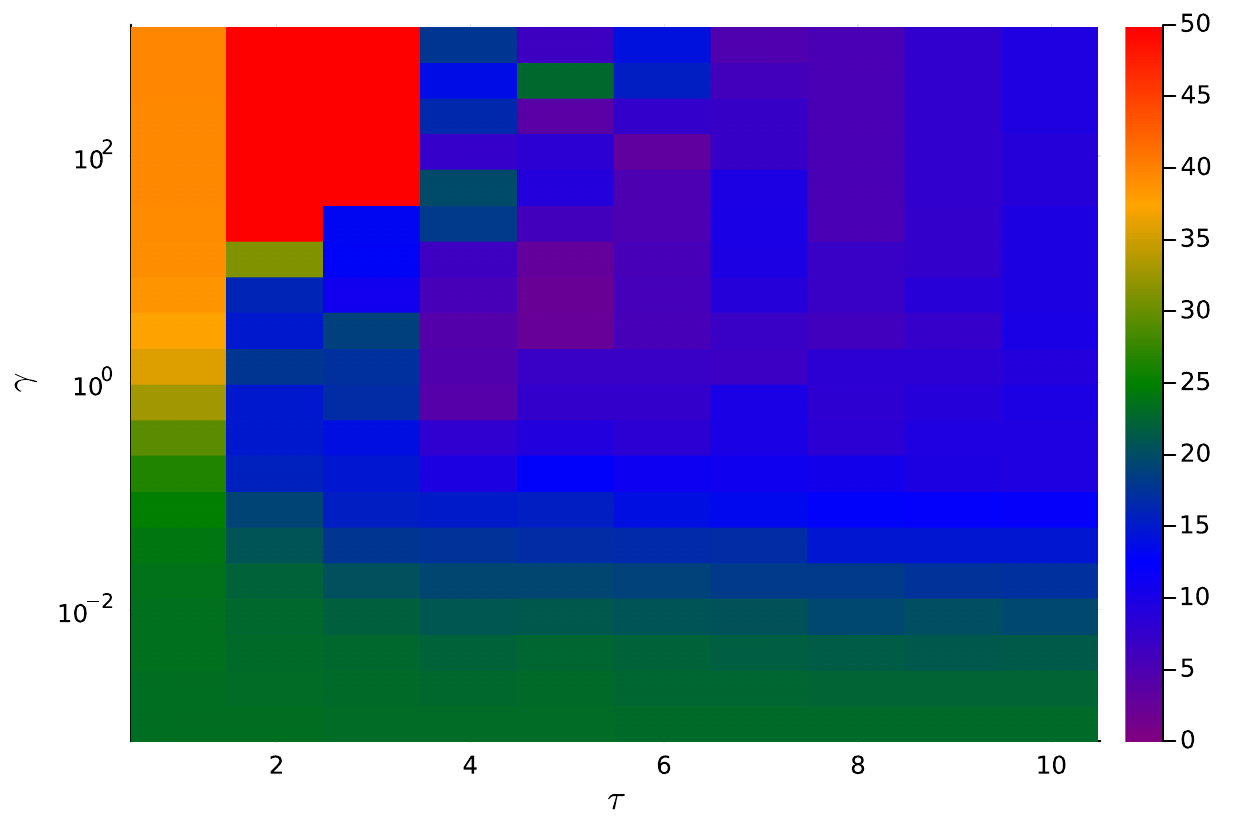}
    \end{subfigure}
    \begin{subfigure}[t]{.45\linewidth}
    \includegraphics[scale=0.33]{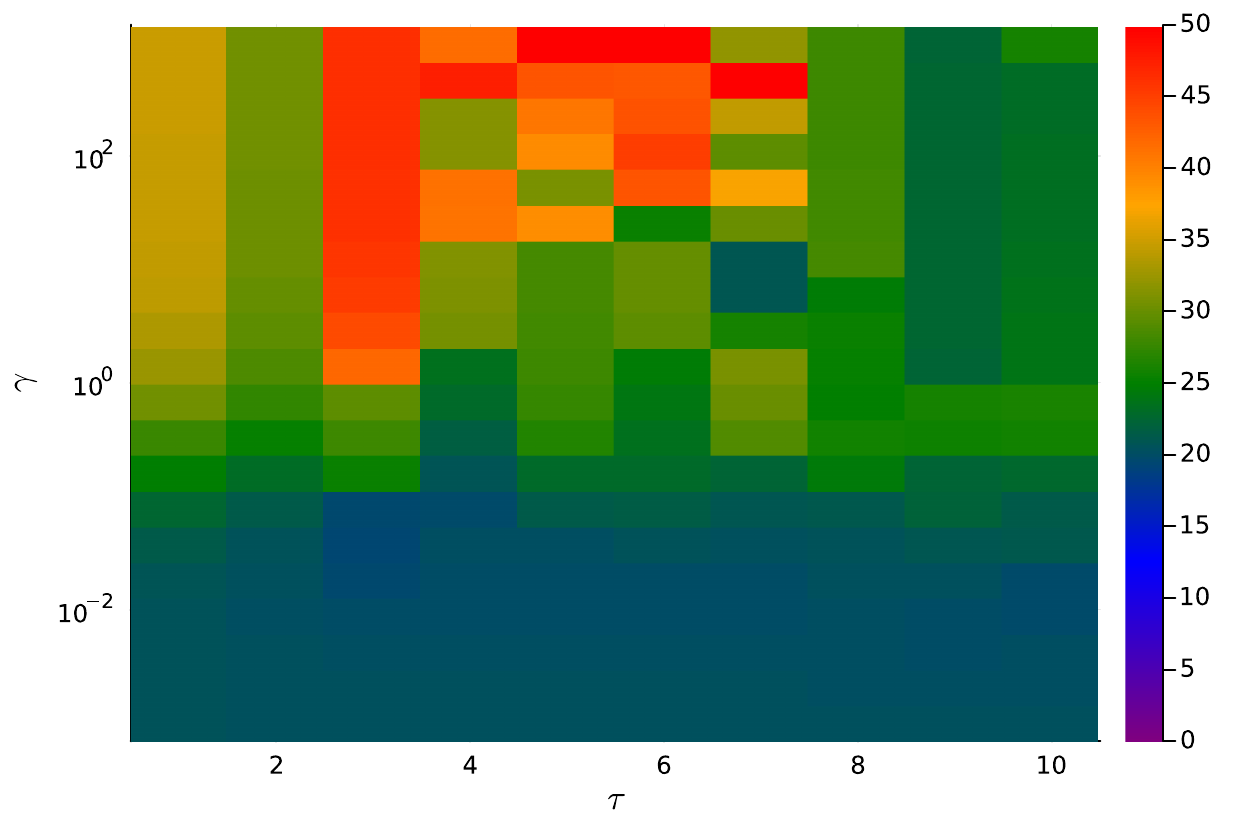}
    \end{subfigure}
    \caption{Five-fold (left) and test (right) error for varying $\tau$ and $\gamma$, for the overdetermined setting (top, $n=50, p=10$) and an underdetermined setting (bottom, $n=10, p=50$) considered in Figure \ref{fig:l10cv_test_comp}. In the overdetermined setting, the five-fold error is a good estimate of the test error for most values of parameters $(\gamma,\tau)$. In contrast, in the underdetermined setting, the five-fold error is a poor approximation of the test error, and the estimator that minimizes the five-fold error {($\gamma= 6.15$, $\tau=5$)} significantly disappoints out-of-sample. }
    \label{fig:fivefoldcv_test_comp}
\end{figure}

\subsection{Proof of Theorem \ref{thm:genbound}}\label{append.genboundproof}
\begin{proof}
The result follows analogously to \citep[Theorem 11]{bousquet2002stability}; the main novelty in this proof compared to \citep[Theorem 11]{bousquet2002stability} is the use of a more general notion of hypothesis stability. In particular, \citet{bousquet2002stability}'s definition is sufficient to derive the result for leave-one-out cross-validation, but not for $k$-fold cross-validation. Accordingly, we first define all the notation needed for the result, then prove the result.

First, suppose that we fix $\theta\in\Theta$ and let $S=(z_1,\ldots,z_n)$ with $z_i=(x_i,y_i)\stackrel{\mathrm{i.i.d.}}{\sim}\mathcal{D}$ be an observation drawn from the stochastic process denoted by $\mathcal{D}$. Further, let $\{\mathcal{N}_j\}_{j=1}^k$ be a (possibly random, data-independent) partition of $[n]$ into $k$ disjoint folds
of equal size $m:=|\mathcal{N}_j|=n/k$. Next, for any dataset $T$, write $\beta_T(\theta)$ for the predictor trained on $T$ using hyperparameters $\theta$. In particular, define the full-sample predictor $\beta(\theta):=\beta_S(\theta)$ and, for each $j\in[k]$, the predictor
trained with fold $j$ removed by
\[
S^{(-j)} := (z_i)_{i\notin \mathcal{N}_j},
\qquad
\beta^{(\mathcal{N}_j)}(\theta)\;:=\;\beta_{S^{(-j)}}(\theta).
\]
Further, for a point $z=(x,y)$ and a predictor $f$, write $\ell(z,f):=\ell(y,f(x))$. Next, define the (conditional) population risk of the full-sample predictor and the $k$-fold CV estimate by
\[
R \;:=\; \mathbb{E}_{z\sim\mathcal{D}}\big[\ell(z,\beta(\theta))\big],
\qquad
R_{\mathrm{CV}} \;:=\; \frac{1}{n}\sum_{j=1}^k \sum_{i\in\mathcal{N}_j}
\ell\!\left(z_i,\beta^{(\mathcal{N}_j)}(\theta)\right).
\]

Further, let $B_j := (z_i)_{i\in\mathcal{N}_j}\in(\mathcal{X}\times\mathcal{Y})^{m}$ denote fold $j$ viewed as a block,
and define the block loss
\[
L(f,B) \;:=\; \frac{1}{m}\sum_{r=1}^{m}\ell(z_r,f).
\]
Then $0\le L(f,B)\le M$ by assumption and we can rewrite
\[
R_{\mathrm{CV}} \;=\; \frac{1}{k}\sum_{j=1}^k L\!\left(\beta^{(\mathcal{N}_j)}(\theta),B_j\right),
\qquad
R \;=\; \mathbb{E}_{B\sim\mathcal{D}^{m}}\big[L(\beta(\theta),B)\big].
\]

We now control the block stability using $\mu_h$. In particular, fix $j\in[k]$ and let $B=(z_1,\ldots,z_m)\sim\mathcal{D}^m$ be independent of $S$. Then
\begin{align*}
\mathbb{E}_{B}\Big[\big|L(\beta^{(\mathcal{N}_j)}(\theta),B)-L(\beta(\theta),B)\big|\Big]
&\le \frac{1}{m}\sum_{r=1}^{m}\mathbb{E}_{z_r\sim\mathcal{D}}
\Big[\big|\ell(z_r,\beta^{(\mathcal{N}_j)}(\theta))-\ell(z_r,\beta(\theta))\big|\Big] \\
&= \mathbb{E}_{z\sim\mathcal{D}}
\Big[\big|\ell(z,\beta^{(\mathcal{N}_j)}(\theta))-\ell(z,\beta(\theta))\big|\Big].
\end{align*}
Taking $\max_{j\in[k]}$ and then $\mathbb{E}_S$ shows that the stability parameter of the learning rule when
evaluated with the block loss $L$ is at most $\mu_h$. Next, the standard second-moment bound for stable algorithms (see, e.g., \citep[Lemma 9]{bousquet2002stability})
applied to the sample of $k$ blocks $(B_1,\ldots,B_k)$ with bounded loss $L$ and stability parameter $\le \mu_h$
yields
\[
\mathbb{E}\big[(R-R_{\mathrm{CV}})^2\big]\ \le\ \frac{M^2}{2k}+3M\mu_h,
\]
where the expectation is over the draw of $S$ (and any fold-partition randomness). Moreover, Markov's inequality gives
\[
\mathbb{P}\big(R-R_{\mathrm{CV}}\ge \varepsilon\big)
\le \mathbb{P}\big((R-R_{\mathrm{CV}})^2\ge \varepsilon^2\big)
\le \frac{\mathbb{E}[(R-R_{\mathrm{CV}})^2]}{\varepsilon^2}.
\]
Setting $\varepsilon := \sqrt{\frac{\frac{M^2}{2k}+3M\mu_h}{\delta}}$ and rearranging terms then yields the claim.
\end{proof}

\section{Pseudocode for Nested Stability-Regularized Cross-Validation}\label{ssec:pseudocode}
We present the pseudocode in Algorithm \ref{alg:nested_cv}. Note that when $\lambda=0$, our selection rule corresponds precisely to standard nested cross-validation, where the nesting step is used to better estimate the out-of-sample performance but the same hyperparameters are selected as in $k$-fold cross-validation. The purpose of the nesting scheme is to select $\lambda$, after which the second half of Algorithm \ref{alg:nested_cv} is exactly standard $k$-fold cross-validation.

\textbf{Implementation note:} Algorithm \ref{alg:nested_cv} is written as if $\Theta$, the set of all hyperparameter combinations, is enumerated (for the sake of clarity). In this case, the models $\bm{\beta}$ need only be computed once for each $\Theta$ and can be reused across all $\lambda$. However, if $\Theta$ is explored adaptively via coordinate descent (as in our experiments), then the nesting step in Algorithm \ref{alg:nested_cv} is not redundant.

\begin{algorithm}[t]
\caption{Stability-regularized nested $k$-fold cross-validation for model selection}
\label{alg:nested_cv}
\begin{algorithmic}[1]
\STATE \textbf{Input:} Dataset $\{(\mathbf{x}_i,y_i)\}_{i\in[n]} \in \mathcal{X}\times\mathcal{Y}$; number of folds $k$;
         learning algorithm $\boldsymbol{\beta}$; loss function $\ell:\mathcal{Y}\times\mathcal{Y}\to\mathbb{R}_+$;
         hyper-parameter grid $\bm{\Theta}$; stability-weight grid $\Lambda$.
\STATE \textbf{Output:} Outer scores $\{m_{\lambda,1},\dots,m_{\lambda,k}\}$; chosen weight $\lambda^\star$; chosen hyper-parameters $\bm{\theta}^\star$.

\STATE Randomly partition the dataset into $k$ disjoint folds indexed by $\mathcal{N}_1,\ldots,\mathcal{N}_k$.

\FOR{$\lambda \in \Lambda$} 
  \FOR{$t \gets 1$ to $k$} 
    \STATE $\mathcal{D}_{\text{rest}}  \gets \{(\mathbf{x}_i,y_i)\}_{i\in [n]\setminus \mathcal{N}_t}$

    \STATE \algcomment{inner loop: hyper-parameter search}
    \FORALL{$\bm{\theta} \in \bm{\Theta}$}
      \STATE $\mu(\bm{\theta}) \gets 0$
      \STATE $\boldsymbol{\beta}^{(\mathcal{N}_t)}(\bm{\theta}) \gets \textsc{fit\_model}(\mathcal{D}_{\text{rest}}, \bm{\theta})$

      \FOR{$t_2 \gets 1$ to $k$}
        \IF{$t_2 \neq t$}
          \STATE $\mathcal{D}_{\text{train}} \gets \{(\mathbf{x}_i,y_i)\}_{i\in [n]\setminus(\mathcal{N}_t \cup \mathcal{N}_{t_2})}$
          \STATE $\mathcal{D}_{\text{val}}   \gets \{(\mathbf{x}_i,y_i)\}_{i\in \mathcal{N}_{t_2}}$

          \STATE $\boldsymbol{\beta}^{(\mathcal{N}_t \cup \mathcal{N}_{t_2})}(\bm{\theta}) \gets \textsc{fit\_model}(\mathcal{D}_{\text{train}}, \bm{\theta})$
          \STATE $s_{t_2}(\bm{\theta}) \gets \dfrac{1}{|\mathcal{N}_{t_2}|}\sum_{i\in\mathcal{N}_{t_2}}
                 \ell\!\left(y_i,\boldsymbol{\beta}^{(\mathcal{N}_t \cup \mathcal{N}_{t_2})}(\bm{\theta},\mathbf{x}_i)\right)$

          \STATE $\mu(\bm{\theta}) \gets \max\!\Biggl(\mu(\bm{\theta}),\;
                 \dfrac{1}{|[n]\setminus\mathcal{N}_t|}\sum_{i\in [n]\setminus\mathcal{N}_t}
                 \Bigl|\ell\!\left(y_i,\boldsymbol{\beta}^{(\mathcal{N}_t \cup \mathcal{N}_{t_2})}(\bm{\theta},\mathbf{x}_i)\right)
                      -\ell\!\left(y_i,\boldsymbol{\beta}^{(\mathcal{N}_t)}(\bm{\theta},\mathbf{x}_i)\right)\Bigr|\Biggr)$
        \ENDIF
      \ENDFOR

      \STATE $\tilde{s}(\bm{\theta}) \gets \dfrac{1}{k-1}\sum_{\substack{t_2=1\\ t_2\neq t}}^{k} s_{t_2}(\bm{\theta})$
      \STATE \algcomment{mean inner CV score}
    \ENDFOR

    \STATE $\bm{\theta}^\star_{\lambda} \gets \arg\min_{\bm{\theta}\in\bm{\Theta}}\Bigl(\tilde{s}(\bm{\theta})+\lambda\,\mu(\bm{\theta})\Bigr)$
    \STATE \algcomment{pick best regularized score}

    \STATE $m_{\lambda,t} \gets \dfrac{1}{|\mathcal{N}_t|}\sum_{i\in\mathcal{N}_t}
           \ell\!\left(y_i,\boldsymbol{\beta}^{(\mathcal{N}_t)}(\bm{\theta}^\star_{\lambda},\mathbf{x}_i)\right)$
    \STATE \algcomment{evaluate on outer fold}
  \ENDFOR

  \STATE $\bar{m}_{\lambda} \gets \dfrac{1}{k}\sum_{t=1}^{k} m_{\lambda,t}$
  \STATE \algcomment{estimated out-of-sample performance at $\lambda$}
\ENDFOR

\STATE $\lambda^\star \gets \arg\min_{\lambda\in\Lambda}\bar{m}_{\lambda}$
\STATE \algcomment{choose best $\lambda$ out-of-sample}

\FORALL{$\bm{\theta} \in \bm{\Theta}$} 
  \STATE $\mu(\bm{\theta}) \gets 0$
  \STATE $\boldsymbol{\beta}(\bm{\theta}) \gets \textsc{fit\_model}(\{(\mathbf{x}_i,y_i)\}_{i\in[n]}, \bm{\theta})$

  \FOR{$t \gets 1$ to $k$}
    \STATE $\boldsymbol{\beta}^{(\mathcal{N}_t)}(\bm{\theta}) \gets \textsc{fit\_model}(\{(\mathbf{x}_i,y_i)\}_{i\in[n]\setminus\mathcal{N}_t}, \bm{\theta})$
    \STATE $s_t(\bm{\theta}) \gets \dfrac{1}{|\mathcal{N}_t|}\sum_{i\in\mathcal{N}_t}
           \ell\!\left(y_i,\boldsymbol{\beta}^{(\mathcal{N}_t)}(\bm{\theta},\mathbf{x}_i)\right)$
    \STATE $\mu(\bm{\theta}) \gets \max\!\Biggl(\mu(\bm{\theta}),\;
           \dfrac{1}{n}\sum_{i\in[n]}
           \Bigl|\ell\!\left(y_i,\boldsymbol{\beta}^{(\mathcal{N}_t)}(\bm{\theta},\mathbf{x}_i)\right)
                -\ell\!\left(y_i,\boldsymbol{\beta}(\bm{\theta},\mathbf{x}_i)\right)\Bigr|\Biggr)$
  \ENDFOR
\ENDFOR
\STATE $\bm{\theta}^\star \gets \arg\min_{\bm{\theta}\in\bm{\Theta}}\Bigl(\dfrac{1}{k}\sum_{t=1}^{k} s_t(\bm{\theta})+\lambda^\star\,\mu(\bm{\theta})\Bigr)$
\STATE \textbf{return} $\min_{\lambda\in\Lambda}\bar{m}_{\lambda},\;\lambda^\star,\;\bm{\theta}^\star$
\end{algorithmic}
\end{algorithm}


\section{Dataset Description}

We use a variety of real datasets from the literature in our computational experiments. The information of each dataset is summarized in Table~\ref{tab:datasets}. Note that we increased the number of features on selected datasets by including second-order interactions. 

\begin{table}[h]
\centering\footnotesize
\begin{tabular}{@{}l r r c c@{}} \toprule
Dataset & n & p & Notes & Reference\\
\midrule
Housing & 506 & 13& &\cite{gomez2021mixed}\\ 
Wine & 6497 & 11&&\cite{gomez2021mixed} \\ 
AutoMPG & 392 & 25&&\cite{gomez2021mixed}\\ 
\multirow{2}{*}{Hitters} & \multirow{2}{*}{263} & \multirow{2}{*}{19}&Removed rows with missing data& \multirow{2}{*}{Kaggle}\\ 
&&&$\bm{y}=\log(\text{salary})$&\\
Prostate & 97 & 8&&R Package \texttt{ncvreg} \\ 
Servo & 167 & 19&One-hot encoding of features &\cite{gomez2021mixed}  \\ 
Toxicity & 38 & 9&&\cite{rousseeuw2009robustbase} \\ 
Steam & 25 & 8&&\cite{rousseeuw2009robustbase} \\ 
Alcohol2 & 44 & 21&2nd order interactions added&\cite{rousseeuw2009robustbase} \\ 
\midrule
TopGear & 242 & 373&&\cite{bottmer2022sparse}\\ 
Bardet & 120 & 200 &&\cite{ye2018sparsity}\\ 
Vessel & 180 & 486&&\cite{christidis2020split} \\ 
Riboflavin & 71 & 4088&&R package \texttt{hdi}\\
\bottomrule
\end{tabular}
\caption{Real datasets used.
}
\label{tab:datasets}
\end{table}

Our sources for these datasets are as follows:
\begin{itemize}
    \item Four UCI datasets: AutoMPG, Housing, Servo, and Wine. We obtained these datasets from the online supplement to \cite{gomez2021mixed}. 
    \item The \verb|alcohol| dataset distributed via the \verb|R| package \verb|robustbase|. Note that we increased the number of features for this dataset by including second-order interactions.
    \item The \verb|Bardet| dataset provided by \cite{ye2018sparsity}.
    \item The \verb|hitters| Kaggle dataset, after preprocessing the dataset to remove rows with any missing entries, and transforming the response by taking log(Salary), as is standard when predicting salaries via regression.
    \item The \verb|Prostate| dataset 
    distributed via the R package \verb|ncvreg|.
    \item The \verb|Riboflavin| dataset distributed by the \verb|R| package \verb|hdi|.
    \item The \verb|steamUse| dataset provided by \cite{rousseeuw2009robustbase}.
    \item The \verb|topgear| dataset provided by \cite{bottmer2022sparse}.
    \item The \verb|toxicity| dataset provided by \cite{rousseeuw2009robustbase}.
    \item The \verb|vessel| dataset made publicly available by \cite{christidis2020split}.
\end{itemize}\FloatBarrier

\section{Supplementary Results for Sparse Ridge Regression}\label{append:ridgeregression_supplementary}
We present supplementary information for the sparse ridge regression experiments in Table \ref{tab:comparison_ourmethods2}.
\begin{table}[h]
\centering\footnotesize
\begin{tabular}{@{}l r r r r r r r r r r r @{}} \toprule
Dataset & n & p & \multicolumn{3}{c@{\hspace{0mm}}}{SCAD} & \multicolumn{3}{c@{\hspace{0mm}}}{GLMNet} & \multicolumn{3}{c@{\hspace{0mm}}}{L0Learn} \\
\cmidrule(l){4-6} \cmidrule(l){7-9} \cmidrule(l){10-12}   &   &   & $\tau$ & CV & MSE & $\tau$ &  kCV & MSE &$\tau$ & CV & MSE \\\midrule
Wine & 6497 & 11 &  11 & 	0.543 & 	0.542 & 	11 & 	0.542 & 	0.542 & 	11 & 	0.542 & 	0.542 \\   
Housing & 506 & 13 & 11.2 & 	24.34 & 	23.69 & 	12.1 & 	23.33 & 	23.79 & 	11 & 	23.48 & 	23.61 \\ 
AutoMPG & 392 & 25 & 17.3 & 	8.894 & 	9.028 & 	19.6 & 	8.705 & 	8.880 & 	15.7 & 	8.766 & 	8.979 \\ 
Hitters & 263 & 19 & 12.1 & 	0.083 & 	0.085 & 	12.2 & 	0.075 & 	0.080 & 	8.9 & 	0.077 & 	0.082 \\ 
Prostate & 97 & 8 & 6.7 & 	0.529 & 	0.557 & 	6.9 & 	0.508 & 	0.569 & 	3.2 & 	0.511 & 	0.547 \\ 
Servo & 167 & 19 &  12.3 & 	0.707 & 	0.706 & 	15.8 & 	0.676 & 	0.709 & 	11.3 & 	0.687 & 	0.735 \\ 
Toxicity & 38 & 9 & 3.5 & 	0.044 & 	0.055 & 	6.1 & 	0.038 & 	0.054 & 	3.2 & 	0.032 & 	0.063 \\ 
Steam & 25 & 8 & 2.9 & 	0.485 & 	0.504 & 	4.1 & 	0.450 & 	0.497 & 	4.4 & 	0.493 & 	0.428 \\ 
Alcohol2 & 44 & 21 & 2.4 & 	0.249 & 	0.271 & 	5.8 & 	0.222 & 	0.240 & 	7.4 & 	0.200 & 	0.287 \\ 
\midrule
TopGear & 242 & 373 & 9.8 & 	0.053 & 	0.063 & 	31.8 & 	0.044 & 	0.048 & 	44.1 & 	0.050 & 	0.060 \\ 
Bardet & 120 & 200 & 9.4 & 	0.009 & 	0.009 & 	32.8 & 	0.007 & 	0.009 & 	35.9 & 	0.007 & 	0.009 \\ 
Vessel & 180 & 486 & 9.7 & 	0.034 & 	0.037 & 	40.0 & 	0.018 & 	0.021 & 	14.5 & 	0.023 & 	0.025 \\ 
Riboflavin & 71 & 4088 & 16.3 & 	0.331 & 	0.358 & 	88.8 & 	0.195 & 	0.276 & 	39.3 & 	0.205 & 	0.352 \\   
\bottomrule
\end{tabular}
\caption{Baseline sparse regression methods, to be compared against cross-validation and nested cross-validation in Table \ref{tab:comparison_ourmethods}.
}
\label{tab:comparison_ourmethods2}
\end{table}


\end{document}